\numberwithin{equation}{section}
\newtheorem{thm}{\indent Theorem}[section]
\newtheorem{cor}[thm]{\indent Corollary}
\newtheorem{prop}[thm]{\indent Proposition}
\newtheorem{dfn}{{\indent\bf Definition}}[section]
\newtheorem{rmk}{{\indent\bf Remark}}[section]
\newtheorem{expl}{{\indent\bf Example}}[section]
\newcommand{\hs}{\hspace}
\newcommand{\td}{\tilde}
\newcommand{\fr}{\frac}
\newcommand{\edd}{\end{document}}
\newcommand{\be}{\begin{equation}}
\newcommand{\ee}{\end{equation}}
\newcommand{\lmx}{\left(\begin{matrix}}
\newcommand{\rmx}{\end{matrix}\right)}
\newcommand{\ldt}{\left|\begin{matrix}}
\newcommand{\rdt}{\end{matrix}\right|}
\newcommand{\hess}{{\rm Hess\,}}
\newcommand{\tr}{{\rm tr\,}}
\newcommand{\vol}{{\rm Vol}}
\newcommand{\gl}{{\rm GL}}
\newcommand{\iso}{{\rm Iso}}
\newcommand{\const}{{\rm const}}
\newcommand{\bbr}{{\mathbb R}}
\newcommand{\mo}{M\"obius }
\newcommand{\ba}{\begin{array}}
\newcommand{\ea}{\end{array}}
\newcommand{\nnm}{\nonumber}
\newcommand{\beal}{\begin{align}}
\newcommand{\eal}{\end{align}}
\newcommand{\bea}{\begin{eqnarray}}
\newcommand{\eea}{\end{eqnarray}}
\newcommand{\pp}[2]{\fr{\partial #1}{\partial #2}}
\newcommand{\ppp}[3]{\fr{\partial^2 #1}{\partial #2\partial #3}}
\newcommand{\tensorscript}[2]{\tensor[#1]{#2}{}\hs{-.3mm}}
\begin{document}

\title[Equiaffine isoparametric functions and their level hypersurfaces]{Equiaffine isoparametric functions and their regular level hypersurfaces}

\author[X. X. Li]{Xingxiao Li$^*$}

\author[W. J. Hao]{Wenjing Hao}

\dedicatory{}

\subjclass[2010]{Primary 53A15; Secondary 53B25.}
\keywords{affine isoparametric function, affine isoparametric hypersurfaces, affine parallel hypersurfaces, affine principal curvature, affine mean curvature}
\thanks{Research supported by
National Natural Science Foundation of China (No. 11671121 and No. 11371018).}
\address{
School of Mathematics and Information Sciences
\endgraf Henan Normal University \endgraf Xinxiang 453007, Henan, P.R. China
}
\email{xxl$@$henannu.edu.cn; haojw0223@163.com}


\begin{abstract}
In this paper, we introduce and study the locally strongly convex equiaffine isoparametric hypersurfaces and equiaffine isoparametric functions on the affine space $A^{n+1}$. Motivated by the case on the Euclidean space $E^{n+1}$, we first introduce the concept of equiaffine parallel hypersurfaces in $A^{n+1}$, obtaining some fundamental identities with the basic equiaffine geometric invariants, and then we define the equiaffine isoparametric hypersurfaces to be ones that are among families of equiaffine parallel hypersurfaces of constant affine mean curvature in $A^{n+1}$. Finally, we introduce the concept of equiaffine isoparametric functions on $A^{n+1}$, and prove that any equiaffine isoparametric hypersurface is exactly a regular level set of some equiaffine isoparametric function.
\end{abstract}

\maketitle


\section{Introduction}\label{s1}
For a long period of time, the study of isoparametric functions (isoparametric hypersurfaces and their focal submanifolds) has been a highly influential field in differential geometry. In the history of this subject, E. Cartan was the pioneer who made a comprehensive study of isoparametric functions (hypersurfaces) on the real space forms. Originally, a hypersurface on real space forms was said to be isoparametric if it is of constant principal curvature. Due to Cartan \cite{cart38}, \cite{cart39}) and M\"unzner (\cite{munz80}, \cite{munz81}), these isoparametric hypersurfaces, especially, of standard Euclidean spheres, became fascinating to study and are closely related to a class of smooth functions satisfying certain equations called Cartan-M\"unzner equations, which we call isoparametric functions (or Cartan polynomial). In fact, any isoparametric hypersurface in real space forms must be a regular level set of an isoparametric function and, conversely, any level set of an isoparametric function is among a family of parallel isoparametric hypersurfaces (cf. \cite{ce-ry}). Later, more generally in a Riemannian manifold $(N,\td g)$, an isoparametric hypersurface was naturally defined as the regular level set of an isoparametric function, equivalent to that it is among a (smooth) family of parallel hypersurfaces of constant mean curvature (\cite{wqm87}). In case that $(N,\td g)$ is not of constant sectional curvature, the conclusion that an isoparametric hypersurface is of constant principal curvature is not true any more in general. In fact, as is known, an isoparametric hypersurface in the complex projective space can not be of constant principal curvature (\cite{wqm80}). Note that there are several nice systematic surveys on isoparametric functions, isoparametric hypersurfaces and their generalizations (\cite{thor00}, \cite{ce08} and \cite{ch12}). See also \cite{c-c-j}, \cite{g-x}, \cite{im}, \cite{g-t13} and \cite{g-t14} for recent progresses and applications.

On the other hand, there are also some other geometric theories of submanifolds in which the concept of isoparametric hypersurfaces appears, being introduced by the constancy of some certain ``principal curvatures''. For example, in the M\"obius geometry of submanifolds in spheres, the M\"obius and Blaschke isoparametric hypersurfaces are systematically studied, and some interesting classification theorems are obtained (see \cite{hlz}, \cite{llwz}, \cite{ltqw}, \cite{ltw}, \cite{lz07}, \cite{lz09}, \cite{rt}, and so on). Then, in a rather broad field of geometry, it is much interesting to find certain relevant isoparametric functions of which the regular level sets are exactly those isoparametric hypersurfaces in geometries other than the Riemannian geometry, say, for both M\"obius and Blaschke isoparametric hypersurfaces.

So, it is also very natural for us to consider affine isoparametric hypersurfaces in the affine geometry of hypersurfaces. This class of hypersurfaces should be assumed to contain some ``nice'' examples, say, homogeneous hypersurfaces and, in particular, the affine hyperspheres which, as we know, are the most important objects in this area.

Motivated by isoparametric hypersurfaces in the Euclidean space, we aim in this paper to introduce, in a natural and reasonable manner, the concept of (equi-)affine isoparametric hypersurfaces and that of (equi-)affine isoparametric functions, initiating a study on them, especially giving a close relation between them. For simplicity, we shall now only consider the locally strongly convex hypersurfaces.

The first things we shall do is introducing the concept of equiaffine parallel hypersurfaces in the affine space $A^{n+1}$ (Definition \ref{equiprlhyps}), proving other equivalent definitions (Proposition \ref{prop3.1} and Corollary \ref{cor3.2}) and obtaining some fundamental identities for the basic equiaffine geometric invariants of these parallel hypersurfaces. After this, we define (in Definition \ref{isop hyps}) an equiaffine isoparametric hypersurface to be the one that is among a family of equiaffine parallel hypersurfaces of constant affine mean curvature in $A^{n+1}$. These are the content of Section \ref{s3}.

The major part of this paper is in Section \ref{s4} where we shall first introduce the concept of equiaffine isoparametric functions on $A^{n+1}$ (Definition \ref{isop hyps}), and then prove the following main theorems:

\begin{thm}\label{main1}
A non-degenerate hypersurface $x:M^n\to A^{n+1}$ in the affine space $A^{n+1}$ is (locally strongly convex) equiaffine isoparametric if and only if it is a regular level set of an equiaffine isoparametric function defined on an open neighbourhood of $x(M^n)$ in $A^{n+1}$.
\end{thm}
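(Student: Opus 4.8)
The plan is to prove the two implications separately, following the classical Riemannian template (as in the treatment of isoparametric functions on general Riemannian manifolds) but replacing the Levi-Civita gradient flow by the equiaffine transversal (Blaschke affine normal) flow that defines equiaffine parallel hypersurfaces in Definition~\ref{equiprlhyps}. Throughout, the key dictionary is: a smooth function $f$ on an open set $U\subseteq A^{n+1}$ should be declared \emph{equiaffine isoparametric} precisely when its regular level sets form a family of equiaffine parallel hypersurfaces each of which has affine mean curvature constant along the leaf, i.e.\ when the two "affine differential parameters" of $f$ (an appropriate analogue of $|\nabla f|$ built from the affine conormal, and an appropriate analogue of $\Delta f$ built from the affine mean curvature) are each functions of $f$ alone. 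So the first step is simply to unwind Definition~\ref{isop hyps} on both sides and reduce the theorem to comparing these two descriptions.

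For the implication ``$\Leftarrow$'': assume $x(M^n)=f^{-1}(c)$ is a regular level set of an equiaffine isoparametric function $f$ on $U$. Since $c$ is a regular value, on a neighbourhood of $x(M^n)$ the level sets $\{f=t\}$, $t$ near $c$, are non-degenerate hypersurfaces, and I would verify — using the equiaffine structure equations and the hypothesis that the relevant first affine differential parameter of $f$ depends only on $f$ — that these level sets are mutually equiaffine parallel in the sense of Definition~\ref{equiprlhyps}; concretely, one reparametrizes $t$ so that the affine-normal transversal vector field is the natural transversal field of the foliation, and the fundamental identities from Section~\ref{s3} (Proposition~\ref{prop3.1}, Corollary~\ref{cor3.2}) identify each $\{f=t\}$ with the equiaffine parallel hypersurface at parameter $s(t)$ issuing from $x(M^n)$. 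The second condition in the definition of an isoparametric function, that the affine mean curvature be constant on each level set, then says exactly that this family has constant affine mean curvature for each fixed parameter, which is the definition (Definition~\ref{isop hyps}) of $x$ being equiaffine isoparametric. This direction is essentially bookkeeping with the identities already established.

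For the implication ``$\Rightarrow$'': assume $x:M^n\to A^{n+1}$ is equiaffine isoparametric, so it sits inside a family $x_s:M^n\to A^{n+1}$ of equiaffine parallel hypersurfaces (say $x_0=x$) each of constant affine mean curvature $H(s)$. The construction of the function is: define $\Phi:M^n\times(-\varepsilon,\varepsilon)\to A^{n+1}$ by $\Phi(p,s)=x_s(p)$, show that $\Phi$ is a local diffeomorphism onto an open neighbourhood of $x(M^n)$ (this uses that the affine normal transversal field is transversal to the leaves and that parallelism is governed by a well-posed ODE in $s$, so the leaves are disjoint for small $|s|$ — here one invokes the fundamental identities of Section~\ref{s3} to control how the affine metric and affine shape operator evolve and hence to guarantee non-vanishing Jacobian), and then set $f=s\circ\Phi^{-1}$. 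By construction the regular level sets of $f$ are exactly the leaves $x_s(M^n)$, so $f^{-1}(0)=x(M^n)$; it remains to check that $f$ satisfies the defining equations of an equiaffine isoparametric function, i.e.\ that its first affine differential parameter is a function of $f$ (immediate, since along $\{f=s\}$ it is a geometric quantity of the single hypersurface $x_s$ normalized by the parallel construction, hence constant in $p$) and that its ``affine-Laplacian''-type parameter is a function of $f$ (this is where the constancy of the affine mean curvature $H(s)$ on each leaf enters: the relevant expression reduces, via the Section~\ref{s3} identities, to an expression in $H(s)$ and $s$ alone).

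The main obstacle I expect is the \emph{first} technical step of the ``$\Rightarrow$'' direction: verifying that the parallel family $\{x_s\}$ genuinely sweeps out an open neighbourhood diffeomorphically, i.e.\ controlling the focal behaviour of the equiaffine parallel construction. Unlike the Euclidean case, the affine normal is a higher-order (third-order) invariant and its flow interacts nontrivially with the affine metric and the Fubini--Pick cubic form, so one must show the evolving affine fundamental forms stay non-degenerate on a uniform $s$-interval; this is precisely what the fundamental identities of Section~\ref{s3} are designed to supply, and the cleanest route is to write the evolution of the affine metric $g_s$ and the affine shape operator $S_s$ along $s$ as a linear ODE system with coefficients built from $H(s)$ and the (pointwise-bounded) affine invariants of $x$, and then shrink $\varepsilon$ so that $g_s$ remains positive-definite. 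A secondary subtlety is to make sure the definition of ``equiaffine isoparametric function'' (Definition~\ref{isop hyps}) has been phrased so that the two differential conditions match the two clauses (parallel family $+$ constant affine mean curvature) on the hypersurface side; once the definition is set up correctly, both implications become applications of the Section~\ref{s3} identities rather than new computations.
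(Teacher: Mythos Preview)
Your architecture (build the tube $\Phi(p,s)=x_s(p)$, set $F=s\circ\Phi^{-1}$, check two ``differential parameters'') is the right template, and the paper follows it in outline. The gap is that your argument rests on a \emph{conjectured} definition of equiaffine isoparametric function --- essentially ``the level sets form an equiaffine parallel family with constant affine mean curvature'' --- under which, as you yourself note, the theorem collapses to bookkeeping. The paper's actual definition (Definition~\ref{dfnisopfun}, not Definition~\ref{isop hyps}) is far more rigid and is where all the content lies: it demands the existence of an auxiliary \emph{Riemannian} metric $\tilde g$ on $U$, admissible to the volume form, with respect to which $F$ is Riemannian-isoparametric, together with three compatibility conditions: the flat Hessian $\hess_0F$ is semi-negative with null line $\bbr\xi$; $\tilde g+\tfrac1a\hess_0F=ds^2$; and $2\tilde\nabla\xi=d\xi-(d\log a)\xi$.

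With this definition neither direction is routine. For ``$\Rightarrow$'' the non-obvious step is \emph{manufacturing} $\tilde g$: the paper sets $\tilde g=\sum\tlmug_{ij}\omega^i\omega^j+c(\mu)^{-2}d\mu^2$ (leafwise Blaschke metric plus a warped normal term coming from $\tlmuy=c(\mu)Y$), computes its Christoffel symbols explicitly, and then verifies all four conditions by hand; in particular condition~(3) reduces to $(\hess_0F)_{ij}=-c\,\tlmug_{ij}$, and condition~(4) to the identity that the Riemannian shape operator of each leaf in $(U,\tilde g)$ is exactly $\tfrac12\tlmub$. For ``$\Leftarrow$'' the paper uses condition~(4) to deduce that $\tfrac1a\xi$ is constant along its own integral curves --- so these are straight lines in $A^{n+1}$ and the leaves are automatically of the form $x+\mu Y$ --- and then uses conditions~(2)--(3) in the decisive computation showing that the metric \emph{induced} by $\tilde g$ on each leaf coincides with that leaf's Blaschke metric; this is what forces $\xi$ to be the affine normal and makes the Riemannian mean curvature (constant by condition~(1)) equal to $\tfrac12\tlmul_1$.

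So the obstacle you flagged (focal control of the parallel family) is minor and handled exactly as you say by shrinking $\delta$. The real missing idea is that you have not engaged with the auxiliary metric $\tilde g$ or conditions~(2)--(4) at all: in one direction you must \emph{construct} $\tilde g$ and verify them, and in the other you must \emph{exploit} them to recover the affine normal and the Blaschke metric from the Riemannian data.
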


While the definition (Definition \ref{isop hyps}) of the equiaffine isoparametric hypersurface is a geometric one, the above main theorem gives an analytic description for the same concept.

\begin{thm}\label{main2}
A non-degenerate hypersurface in the affine space $A^{n+1}$ is equiaffine isoparametric if and only if it is of constant affine principal curvature.
\end{thm}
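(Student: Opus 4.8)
The plan is to prove Theorem \ref{main2} by exploiting the two equivalent descriptions of equiaffine parallel hypersurfaces already established (Definition \ref{equiprlhyps}, Proposition \ref{prop3.1}, Corollary \ref{cor3.2}) together with the fundamental identities relating the equiaffine invariants of nearby parallel hypersurfaces. Recall that a locally strongly convex equiaffine isoparametric hypersurface $x:M^n\to A^{n+1}$ sits inside a one-parameter family $x_t$ of equiaffine parallel hypersurfaces, each of constant affine mean curvature. The key tool will be the identities (to be proved in Section \ref{s3}) expressing the affine shape operator $S_t$ (hence its eigenvalues, the affine principal curvatures) and the affine mean curvature $L_{1,t}=\frac1n\tr S_t$ of $x_t$ in terms of those of $x=x_0$ and the parameter $t$. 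Schematically, the affine shape operator should transform as a fractional-linear-type expression $S_t=(S_0 - c(t)\,\id)(\text{something})^{-1}$ or, more precisely in the affine setting, the affine principal curvatures $\lambda_i(t)$ of $x_t$ evolve by an ODE/algebraic law driven by $\lambda_i(0)$, so that the eigenspace decomposition is preserved along the family.

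First I would set up the direction ``constant affine principal curvature $\Rightarrow$ equiaffine isoparametric.'' If the affine principal curvatures $\lambda_1,\dots,\lambda_n$ are constant on $M^n$, then the affine mean curvature $L_1=\frac1n\sum_i\lambda_i$ is constant on $x_0$; one must then show that each equiaffine parallel hypersurface $x_t$ again has constant affine principal curvatures (hence constant affine mean curvature), which follows by plugging $\lambda_i(0)=\const$ into the transformation identities from Section \ref{s3}: the $\lambda_i(t)$ become functions of $t$ alone. This shows $x_0$ lies in a family of equiaffine parallel hypersurfaces of constant affine mean curvature, i.e.\ it is equiaffine isoparametric.

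For the converse, ``equiaffine isoparametric $\Rightarrow$ constant affine principal curvature,'' I would argue as follows. By hypothesis every member $x_t$ of the parallel family has constant affine mean curvature $L_1(t)=\frac1n\sum_i\lambda_i(t)$, where now $\lambda_i(t)$ are the (a priori non-constant, $p$-dependent) affine principal curvature functions of $x_t$. Differentiating the transformation identity for the affine shape operator in $t$ and evaluating the trace, one gets that $\frac{d}{dt}\big(\tr S_t\big)$ is a specific universal expression in the power sums $\tr(S_0^k)$; since $\tr S_t\equiv n L_1(t)$ is constant on $M^n$ for every $t$, differentiating repeatedly in $t$ (at $t=0$, say) produces a sequence of equations forcing $\tr(S_0)$, $\tr(S_0^2)$, $\tr(S_0^3)$, \dots, hence all elementary symmetric functions of the $\lambda_i$, to be constant on $M^n$. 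Constancy of all elementary symmetric functions of the eigenvalues of the (diagonalizable, since locally strongly convex) affine shape operator is equivalent to constancy of the eigenvalues themselves, i.e.\ the affine principal curvatures are constant.

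The main obstacle I anticipate is purely bookkeeping but genuinely delicate: obtaining the exact transformation law for the affine shape operator (equivalently, the affine Weingarten form and the affine metric) under passing to an equiaffine parallel hypersurface, and verifying that the derived chain of trace identities really is rich enough to separate all $n$ elementary symmetric functions rather than only a few of them. One must be careful that the affine normalization (the choice of equiaffine transversal field / the Blaschke structure) is exactly the one making the parallel family well-defined, so that no extra conformal-type factor sneaks into the identities; this is precisely what Proposition \ref{prop3.1} and Corollary \ref{cor3.2} are there to guarantee. Once those identities are in hand, both implications reduce to linear algebra of symmetric functions plus a finite induction on the order of $t$-differentiation.
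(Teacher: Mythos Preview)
Your proposal is correct and follows essentially the same route as the paper: the transformation law you anticipate is exactly \eqref{mulmbd}, namely $\tlmulmbd_i=\dfrac{c\lambda_i}{1-\mu\lambda_i}$ with $c=c(\mu)$ constant along each leaf by Proposition~\ref{prop3.1}, and the paper likewise differentiates the resulting mean-curvature identity \eqref{mul1c} repeatedly in $\mu$ and evaluates at $\mu=0$ to force all power sums $\sum_i\lambda_i^\iota$ (hence all $\lambda_i$) to be constant. The ``easy'' direction you outline is precisely Proposition~\ref{prop3.5} together with Corollary~\ref{cor3.2}, so nothing is missing.
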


Theorem \ref{main2} is exactly what we really expect to have. We also have the following corollaries:

\begin{cor}[Corollaries \ref{cor4.1} and \ref{cor4.1'}]
A non-degenerate hypersurface in the affine space $A^{n+1}$ is equiaffine isoparametric if and only if it is among a family of equiaffine parallel hypersurfaces.
\end{cor}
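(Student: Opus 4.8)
The plan is to split the equivalence into its two implications, both of which reduce to material already in place. The forward implication is essentially a tautology: by Definition~\ref{isop hyps} an equiaffine isoparametric hypersurface is, among other things, a member of a family of equiaffine parallel hypersurfaces (namely those of constant affine mean curvature), so in particular it is among a family of equiaffine parallel hypersurfaces. Nothing more is needed there, and this is the content of one of the two referenced corollaries.

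For the converse I would start from a family $\{M_t\}_{t\in I}$ of equiaffine parallel hypersurfaces with $x(M^n)=M_{t_0}$ for some $t_0\in I$, and verify that such a family automatically meets the constant-affine-mean-curvature requirement of Definition~\ref{isop hyps}; equivalently, by Theorem~\ref{main2}, it suffices to show that $M^n$ is of constant affine principal curvature. The tool for this is the package of fundamental identities obtained in Section~\ref{s3}, together with the equivalent descriptions of equiaffine parallelism in Proposition~\ref{prop3.1} and Corollary~\ref{cor3.2}: these express the affine shape operator $S_t$ of $M_t$ — hence its affine principal curvatures and its affine mean curvature — in terms of $S_{t_0}$ and the parameter $t-t_0$, and, crucially, encode the compatibility of the affine normalizations along the whole family. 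Differentiating these identities in $t$ at $t=t_0$ and using the apolarity/volume normalization built into the affine metric, one sees that validity of the parallel-family relation on an entire interval of parameter values (as opposed to the mere existence of one parallel companion) is possible only when the affine principal curvature functions of $M_{t_0}$ are constant on $M_{t_0}$; the ``extra freedom'' present in the Euclidean case is eliminated by the affine normalization. Theorem~\ref{main2} then closes the argument. A slightly different route, kept in reserve, is to produce directly the parameter function $f$ of the family, check via the Section~\ref{s3} identities that the quantities entering the definition of an equiaffine isoparametric function depend on $f$ alone, and invoke Theorem~\ref{main1}.

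The only point I expect to require genuine care is precisely this last one — showing that membership in a parallel family over an open interval forces constancy, and that the conclusion is uniform on $M^n$ rather than merely infinitesimal at $t_0$. Attached to it are the routine technical nuisances: restricting to the open set on which $I-(t-t_0)S_{t_0}$ remains invertible so that all the $M_t$ are honest immersed hypersurfaces, avoiding the focal locus, and arranging that the local families and their parameter functions match up to yield a statement on a neighbourhood of $x(M^n)$. Once these are dealt with, the corollary follows by combining Definition~\ref{isop hyps}, the fundamental identities of Section~\ref{s3}, and Theorem~\ref{main2} (or, along the alternative route, Theorem~\ref{main1}).
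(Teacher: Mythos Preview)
Your proposal is correct and follows essentially the same route as the paper: the forward direction is immediate from Definition~\ref{isop hyps}, and for the converse the paper, like you, appeals to Corollary~\ref{cor3.2}---specifically \eqref{dett=const}---noting that if $\det(I-\mu B)=\prod_i(1-\mu\lambda_i)$ is constant on $M^n$ for every $\mu$ in an interval then each coefficient $L_r$ is constant, so the affine principal curvatures are constant and Theorem~\ref{main2} finishes. The paper's own justification is just as terse (it merely points to \eqref{dett=const}), so your sketch, while more discursive about differentiation and the technical caveats, captures exactly the same mechanism.
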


\begin{cor}[Corollary \ref{cor4.2}]
A locally strongly convex hypersurface in the affine space $A^{n+1}$ is an (equi)affine hypersphere if and only if it is among a family of equiaffine parallel affine hyperspheres of the same type.
\end{cor}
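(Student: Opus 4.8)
The plan is to derive Corollary~\ref{cor4.2} from Theorem~\ref{main2}, from Corollaries~\ref{cor4.1} and~\ref{cor4.1'}, and from the fundamental identities for equiaffine parallel families set up in Section~\ref{s3}. One implication is trivial: if a locally strongly convex hypersurface $x:M^n\to A^{n+1}$ lies in a family of equiaffine parallel affine hyperspheres of a common type, then $x$ is itself one of those members, hence an affine hypersphere. So the work is entirely in the converse.

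Assume, then, that $x:M^n\to A^{n+1}$ is a locally strongly convex affine hypersphere, so that its affine shape operator is $S=L_1\,\id$ with $L_1$ the affine mean curvature; as is classical (for $n\ge 2$), $L_1$ is a constant, and hence all affine principal curvatures of $x$ are equal to the constant $L_1$. In particular $x$ is of constant affine principal curvature, so by Theorem~\ref{main2} it is equiaffine isoparametric, and by Corollaries~\ref{cor4.1} and~\ref{cor4.1'} it is a member --- say at parameter $t=0$ --- of a connected smooth family $\{x_t\}$ of equiaffine parallel hypersurfaces.

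Next I show that every $x_t$ in this family is again a locally strongly convex affine hypersphere. The identities of Section~\ref{s3} express the affine shape operator $S_t$ of $x_t$ in terms of $S$ and $t$ in a way that is isotropic in $S$ (that is, $S_t$ is simultaneously diagonalizable with $S$, with eigenvalues an explicit scalar function of those of $S$); substituting $S=L_1\,\id$ therefore yields $S_t=L_1^{(t)}\,\id$ for some scalar $L_1^{(t)}$ with $L_1^{(0)}=L_1$, so $x_t$ is an affine hypersphere with (automatically constant) affine mean curvature $L_1^{(t)}$. Equivalently and more concretely, an equiaffine parallel hypersurface inherits the affine normal lines of $x$: in the proper case $L_1\neq 0$ these lines are concurrent at the center $c$ of $x$, so $x_t$ is a proper affine hypersphere with the same center (normalizing $c$ to the origin gives $\xi=-L_1 x$ and $x_t=x+t\xi=(1-tL_1)\,x$, a homothety of $x$ about $c$), while in the improper case $L_1=0$ the affine normal $\xi$ is a constant vector and $x_t=x+t\xi$ is merely a translate of $x$.

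Finally one must check that the type is preserved, i.e.\ that $L_1^{(t)}$ keeps the sign of $L_1$ throughout the connected parameter domain of $\{x_t\}$. When $L_1=0$ this is clear, since then $L_1^{(t)}\equiv 0$. When $L_1\neq 0$, the only way $x_t$ can degenerate is at the focal point of $x$, where $1-tL_1=0$ and $x_t$ collapses to the center; this value of $t$ is excluded from the connected domain of the family, so on that domain $1-tL_1$ never vanishes and hence stays positive (it equals $1$ at $t=0$), forcing $L_1^{(t)}$ to be a smooth, nowhere-zero function of $t$ there with the sign of $L_1^{(0)}=L_1$. Thus all members of $\{x_t\}$ are affine hyperspheres of the same type, which completes the proof. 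The only non-formal step is this last one: extracting from the Section~\ref{s3} identities the precise dependence of $L_1^{(t)}$ on $t$ and verifying that it can neither vanish nor blow up along the admissible range of $t$, so that the type cannot jump --- that is where the care is needed.
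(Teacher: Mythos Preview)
Your argument is correct and rests on the same key observation as the paper: the formula \eqref{mulmbd}, $\tlmulmbd_i=\dfrac{c\lambda_i}{1-\mu\lambda_i}$, shows at once that the equiaffine parallel hypersurfaces $\tlmux$ have all affine principal curvatures equal if and only if $x$ does. The paper's proof is literally that one line; your detour through Theorem~\ref{main2} and Corollaries~\ref{cor4.1}, \ref{cor4.1'} is unnecessary (the existence of the equiaffine parallel family follows directly from Corollary~\ref{cor3.2} since $\det(\tlmut)=(1-\mu\lambda)^n$ is constant), though your explicit verification that the \emph{type} is preserved is a genuine improvement over the paper, which leaves that point implicit.
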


\begin{rmk}\rm The very last corollary may provide a new direction of insight to deal with the classification problem for affine hyperspheres.
\end{rmk}

The first author thanks Professor A.-M. Li for his constant encouragement.

\section{Equiaffine differential geometry of hypersurfaces}\label{s2}
In this section, we brief some basic facts for the equiaffine differential geometry of locally strongly convex hypersurfaces, including the necessary notations we shall use in this paper. For more details of this, we refer the readers to \cite{luz} and \cite{ns}.

Let $\bbr$ be the field of real numbers and $\bbr^{n+1}$ be the real vector space of all ordered $(n+1)$-tuples of real numbers, that is,
$$
\bbr^{n+1}=\{x=(x^1,\cdots,x^n,x^{n+1});\ x^1,\cdots,x^n,x^{n+1}\in\bbr\}.
$$
Then on $\bbr^{n+1}$ there are a canonical flat connection $d$ defined by the usual component-wise differentiation of $\bbr^{n+1}$-valued functions, and a canonical volume measure $\vol$ defined by the determinant function of $n+1$ vectors. Endowed with the connection $d$ and the volume measure $\vol$, $\bbr^{n+1}$ is taken to be a measured affine space which we denote by $A^{n+1}$.

Note that the group $\gl(n+1)$ of linear transformations on $\bbr^{n+1}$ together with the additive group $\bbr^{n+1}$ of translations on $\bbr^{n+1}$ makes into a semi-direct product group $A(n+1):=\gl(n+1)\ltimes \bbr^{n+1}$, called the {\em affine transformation} group on $A^{n+1}$. An element $T\in A(n+1)$ is called a {\em uni-modular transformation} if it preserves the volume measure $\vol$ of $A^{n+1}$. We denote by ${\rm UA}(n+1)$ the subgroup of $A(n+1)$ consisting of all the uni-modular transformations on $A^{n+1}$. Moreover, given an open domain $U\subset A^{n+1}$, we call a Riemannian metric $\td g$ on $U$ {\em admissible} to the volume measure $\vol$, if the isometry group $\iso(\td g)\subset {\rm UA}(n+1)$.

Since the tangent bundle $TA^{n+1}=A^{n+1}\times\bbr^{n+1}$, for a tangent frame field $\{e_1,\cdots,e_{n+1}\}$ on $A^{n+1}$, the volume function $\vol(e_1,\cdots,e_{n+1})$ of $\{e_1,\cdots,e_{n+1}\}$ makes sense. We call $\{e_1,\cdots,e_{n+1}\}$ {\em uni-modular} if $\vol(e_1,\cdots,e_{n+1})\equiv 1$.

For simplicity, the following ranges of indices are always assumed without further specification in the present paper:
$$
1\leq i,j,k,\cdots\leq n;\quad 1\leq A,B,C,\cdots\leq n+1.
$$

Now let $x:M^n\to A^{n+1}$ be an $n$-dimensional immersion of manifold $M^n$ into $A^{n+1}$. Then we simply call $x$ or $x(M^n)$ a hypersurface of $A^{n+1}$. A frame field $\{e_1,\cdots,e_{n+1}\}$ around $x(M^n)$ is called {\em uni-modular affine Darbourx}, or simply, {\em affine Darbourx}, if it is uni-modular and, when restricted to $x(M^n)$, $e_1,\cdots,e_n$ are tangent to $x(M^n)$. In this case, $\{e_1,\cdots,e_n\}$, by restriction to $x(M^n)$ and then via the tangent map $x_*$, locally defines a frame field on $M^n$ which we still denote as $\{e_1,\cdots,e_n\}$. By fixing a (uni-modular) affine Darbourx frame field $\{e_1,\cdots,e_{n+1}\}$, one has the local decomposition of vector bundle:
$$
x^*TA^{n+1}=x_*(TM^n)\oplus\bbr\cdot e_{n+1}.
$$
So that
\be\label{gauss-e}
x_{ij}\equiv e_j(e_i(x))=\sum\Gamma^k_{ij}x_k+h_{ij}e_{n+1},\quad\forall\,i,j
\ee
where $\Gamma^k_{ij}$ and $h_{ij}$ are local smooth functions. The (induced) connection on $M^n$ given by the coefficients $\Gamma^k_{ij}$ is called the affine connection of $x$, while \eqref{gauss-e} is called the {\em affine Gauss formula}.

When $x$ is {\em non-degenerate}, that is, the matrix $(h_{ij})$ is non-singular everywhere on $M^n$, the locally defined function $H=|\det(h_{ij})|>0$. Define $G_{ij}=H^{-\fr1{n+2}}h_{ij}$ for each pair of $i,j$. Then $G=\sum G_{ij}\omega^i\omega^j$ is a well-defined Pseudo-Riemannian metric on $M^n$ (\cite{luz}), where $\omega^1,\cdots,\omega^n,\omega^{n+1}$ is the dual frame field of $\{e_1,\cdots,e_{n+1}\}$. In particular, if $x$ is {\em locally strongly convex}, or the matrix $(h_{ij})$ is definite everywhere, we can suitably choose the orientation to make the matrix $(h_{ij})$ and hence the metric $G$ positive definite. Conventionally the metric $G$ defined in this way is called the {\em Blaschke metric} or {\em affine metric}, by which the affine normal vector $Y$ of $x$ is defined as
\be
Y=\fr1n\Delta_Gx.
\ee
For a given point $u\in M$, the straight line passing through $x(u)$ and parallel to $Y$ is called the affine normal line of $x$ at $u$.

Let $\td\Gamma^k_{ij}$ be the coefficients of the Levi-Civita connection of $G$ with respect to $\{e_1,\cdots,e_n\}$. Then the $(1,2)$-tensor $A$ defined by $A^k_{ij}=\Gamma^k_{ij}-\td\Gamma^k_{ij}$ is called the difference tensor. The difference tensor is identified, via the metric $G$, with a symmetric $3$-form
$$A=\sum A_{ijk}\omega^i\omega^j\omega^k,\quad A_{ijk}=\sum G_{kl}A^l_{ij},$$
which is called the {\em Fubini-Pick form}, or {\em the cubic form}, or the {\em affine third fundamental form}. Furthermore, if the one-forms $\omega^1_{n+1},\cdots,\omega^{n+1}_{n+1}$ are given by $$de_{n+1}=\sum\omega^i_{n+1}x_i+\omega^{n+1}_{n+1}e_{n+1},$$ Then we have (\cite{luz})

\begin{prop}\label{prop2.1} Let $\{e_1,\cdots,e_{n+1}\}$ be an arbitrary affine Darbourx frame field of the hypersurface $x$. Then the following three conditions are equivalent to each other:

(1) $e_{n+1}$ is parallel to the affine normal vector $Y$;

(2) $\tr_GA\equiv\sum G^{ij}A^k_{ij}=0$ where $(G^{ij})=(G_{ij})^{-1}$;

(3) $\omega^{n+1}_{n+1}+\fr1{n+2}d\log H=0$.
\end{prop}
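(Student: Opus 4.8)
The plan is to prove $(1)\Leftrightarrow(2)$ and $(2)\Leftrightarrow(3)$ separately: the first equivalence by writing the Blaschke affine normal explicitly in the given affine Darboux frame, the second by a short computation with the associated connection $1$-forms. The common thread is that the Blaschke metric is built from the $h_{ij}$ of the affine Gauss formula \eqref{gauss-e} by the normalizing factor $H^{-1/(n+2)}$.

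First I would compute $\Delta_Gx$. Since \eqref{gauss-e} computes the covariant derivatives $(x^*d)_{e_i}(x_*e_j)$ in terms of the affine connection $\nabla$, and $\nabla-\nabla^G=A$ by the definition $A^k_{ij}=\Gamma^k_{ij}-\td\Gamma^k_{ij}$ of the difference tensor, one gets $\hess_Gx(e_i,e_j)=\sum_kA^k_{ij}x_k+h_{ij}e_{n+1}$. Taking the $G$-trace and using $G^{ij}=H^{1/(n+2)}h^{ij}$, so that $\sum_{i,j}G^{ij}h_{ij}=nH^{1/(n+2)}$, yields
\[
Y=\fr1n\Delta_Gx=\fr1n\sum_k(\tr_GA)^kx_k+H^{1/(n+2)}e_{n+1},\qquad(\tr_GA)^k:=\sum_{i,j}G^{ij}A^k_{ij}.
\]
Since $H^{1/(n+2)}>0$ and $x_1,\dots,x_n,e_{n+1}$ are linearly independent, $e_{n+1}$ is parallel to $Y$ exactly when the tangential part above vanishes, i.e. when $(\tr_GA)^k=0$ for every $k$; this gives $(1)\Leftrightarrow(2)$.

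Next I would bring in the connection forms $\omega^B_A$ defined by $de_A=\sum_B\omega^B_Ae_B$ along $x(M^n)$, so that $\omega^k_i=\sum_j\Gamma^k_{ij}\omega^j$ there. Differentiating the uni-modularity relation $\vol(e_1,\dots,e_{n+1})\equiv1$ and using that $\vol$ is $d$-parallel gives $\sum_A\omega^A_A=0$, hence $\omega^{n+1}_{n+1}=-\sum_i\omega^i_i$. For the Levi-Civita forms $\td\omega^k_i=\sum_j\td\Gamma^k_{ij}\omega^j$ of $G$, metric compatibility together with Jacobi's formula gives $\sum_i\td\omega^i_i=\fr12 d\log\det(G_{ij})$, and $\det(G_{ij})=H^{-n/(n+2)}\det(h_{ij})=H^{2/(n+2)}$ (after orienting so that $\det(h_{ij})=H$) turns this into $\sum_i\td\omega^i_i=\fr1{n+2}d\log H$. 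Subtracting $\omega^k_i-\td\omega^k_i=\sum_jA^k_{ij}\omega^j$ at $k=i$ and summing then gives $\sum_i\omega^i_i=\fr1{n+2}d\log H+\sum_j\big(\sum_iA^i_{ij}\big)\omega^j$. Invoking the total symmetry of the Fubini--Pick form one identifies $\sum_iA^i_{ij}$ with the $j$-th component of the $1$-form metrically dual to $\tr_GA$, so $\tr_GA=0$ if and only if $\sum_i\omega^i_i=\fr1{n+2}d\log H$, which, via $\omega^{n+1}_{n+1}=-\sum_i\omega^i_i$, is exactly condition $(3)$. This completes the argument.

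The computations themselves are routine, modulo the two determinant identities $G^{ij}=H^{1/(n+2)}h^{ij}$ and $\det(G_{ij})=H^{2/(n+2)}$ and a little care with connection forms relative to a possibly non-coordinate tangent frame. The one genuinely essential ingredient — and the step I expect to need the most care — is the total symmetry of the cubic form: it is precisely what makes the ``connection-form'' trace $\sum_iA^i_{ij}$ coincide with the apolarity trace $\tr_GA$, and hence what ties $(3)$ to $(2)$. Without that symmetry the two vanishing conditions would be different and the equivalence would fail.
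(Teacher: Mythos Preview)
Your argument is correct. Note, however, that the paper does not supply its own proof of this proposition: it is quoted as a known fact with a citation to \cite{luz} (Li--Simon--Zhao), so there is no in-paper proof to compare against. Your route---computing $Y=\tfrac1n\Delta_Gx$ directly from \eqref{gauss-e} to get $(1)\Leftrightarrow(2)$, then using the unimodularity trace identity $\sum_A\omega^A_A=0$ together with $\sum_i\td\omega^i_i=\tfrac12\,d\log\det(G_{ij})$ to get $(2)\Leftrightarrow(3)$---is the standard one found in the affine differential geometry literature, and your identification of the total symmetry of $A_{ijk}$ as the step linking $\sum_iA^i_{ij}$ to $(\tr_GA)_j$ is exactly right. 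One small point worth making explicit in a polished write-up: the identity $\sum_i\td\omega^i_i=\tfrac12\,d\log\det(G_{ij})$ follows from metric compatibility $dG_{ij}=\sum_k(G_{kj}\td\omega^k_i+G_{ik}\td\omega^k_j)$ contracted against $G^{ij}$, and hence holds in an arbitrary (not necessarily coordinate) tangent frame, which justifies your parenthetical remark.
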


Moreover, when $e_{n+1}$ is parallel to $Y$ we have \be\label{y-y}
Y=H^{\fr1{n+2}}e_{n+1},\quad\vol(x_1,\cdots,x_n,Y)=H^{\fr1{n+2}}.
\ee
Thus, by using the affine normal vector $Y$, the affine Gauss formula \eqref{gauss-e} can be written as
\be\label{gauss-y}
x_{ij}=\sum\Gamma^k_{ij}x_k+G_{ij}Y,\quad\forall\,i,j
\ee

On the other hand, the affine Weingarten map or the affine shape operator $B=(B^j_i)$ is defined by
\be\label{yi}
Y_i=-\sum B^j_ix_j,\quad\forall\,i.
\ee
Accordingly, the corresponding $2$-form $B=\sum B_{ij}\omega^i\omega^j$ with $B_{ij}=\sum G_{ik}B^k_j$ is called the {\em affine (second) fundamental form}. Furthermore, the eigenvalues $\lambda_1,\cdots,\lambda_n$ of the matrix $(B^j_i)$, which are globally well-defined on $M^n$, are called the {\em affine principal curvatures} of $x$, from which a class of important and interesting hypersurfaces in the affine geometry are defined as follows.

\begin{dfn}
A locally strongly convex hypersurface is called an (equi)affine hypersphere if all of its affine principal curvatures are equal to a same constant $\lambda$. Furthermore, an affine hypersphere is called elliptic (resp. parabolic, hyperbolic) if $\lambda>0$ (resp. $\lambda=0$, $\lambda<0$).
\end{dfn}

\begin{dfn}\label{affmeancur}
The affine mean curvature of the hypersurface $x$, denoted by $L_1$ is defined as
\be\label{afmeancur}
L_1=\fr1n\tr_GB\equiv\fr1n\sum B^i_i=\fr1n\sum G^{ij}B_{ij} =\fr1n\sum_i\lambda_i.
\ee
Moreover, $x$ is called affine maximal if $L_1\equiv 0$.
\end{dfn}

Finally, the affine Gauss equation, that relates the Riemannian curvature tensor $R_{ijkl}$ to the fundamental affine invariants $G$, $A$ and $B$,  and the affine Codazzi equations are expressed as follows (\cite{luz}):
\begin{align}
R_{ijkl}=&\sum(A^m_{ik}A_{mjl}-A^m_{il}A_{mjk})\\
&+\fr12(G_{il}B_{jk}+G_{jk}B_{il}-G_{ik}B_{jl}-G_{jl}B_{ik},\\
A_{ijk,l}-&A_{ijl,k}=\fr12(G_{ik}B_{jl}+G_{jk}B_{il} -G_{il}B_{jk}-G_{jl}B_{ik}),\\
B_{ij,k}-&B_{ik,j}=\sum(A^l_{ij}B_{kl}-A^l_{ik}B_{jl})
\end{align}
for all $i,j,k,l$.

\section{Equiaffine parallel hypersurfaces and equiaffine isoparametric hypersurfaces}\label{s3}
\newcommand{\tlmu}[1]{\tensorscript{^\mu}{#1}}
\newcommand{\tlmux}{\tlmu{x}}
\newcommand{\tlmuy}{\tlmu{Y}}
\newcommand{\tlmug}{\tlmu{G}}
\newcommand{\tlmua}{\tlmu{A}}
\newcommand{\tlmub}{\tlmu{B}}
\newcommand{\tlmul}{\tlmu{L}}
\newcommand{\tlmugm}{\tlmu{\Gamma}}
\newcommand{\tlmuh}{\tlmu{H}}
\newcommand{\tlmut}{\tlmu{T}}
\newcommand{\tlmutdgm}{\tlmu{\td\Gamma}}
\newcommand{\tlmulmbd}{\tlmu{\lambda}}
Let $x:M^n\to A^{n+1}$ be a locally strongly convex hypersurface with the affine metric $G$, the Fubini-Pick form $A$, the affine second fundamental form $B$ and the affine normal vector $Y$. For each given $\mu\in C^\infty(M^n)$, we can define a new hypersurface $\tlmux$ as follows:

\be\label{paral hyps}
\tlmux(u)=x(u)+\mu(u)Y(u),\quad u\in M^n.
\ee

In what follows in this paper, we always use a left superscript $\mu$ to an affine invariant $\Gamma$, that is, $\tlmu{\Gamma}$, to denote the corresponding invariant for the hypersurface $\tlmux$. For example, the affine metric, the affine normal vector, the difference tensor or the Fubini-Pick form, the affine Weingarten map or the affine fundamental form and the affine mean curvature are denoted by $\tlmug$, $\tlmuy$, $\tlmua$, $\tlmub$ and $\tlmul_1$, respectively.

\begin{dfn}\label{prlhyps}
A hypersurface $\tlmux:M^n\to A^{n+1}$ defined by \eqref{paral hyps} is called affine parallel to $x:M^n\to A^{n+1}$ if

(1) $\tlmux$ is locally strongly convex;

(2) the tangent spaces $\tlmux_{*u}(T_uM^n)$ and $x_{*u}(T_uM^n)$ are parallel in $A^{n+1}$ for all $u\in M^n$.
\end{dfn}

Then the following corollary is direct from the above definition:
\begin{cor}\label{cor3.1}
Let a hypersurface $\tlmux$ be given by \eqref{paral hyps}. Then $\tlmux$ is affine parallel to $x$ if and only if the function $\mu$ is a constant one.
\end{cor}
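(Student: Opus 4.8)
The plan is to show directly that condition (2) of Definition \ref{prlhyps} --- parallelism of the tangent planes $\tlmux_{*u}(T_uM^n)$ and $x_{*u}(T_uM^n)$ in $A^{n+1}$ for every $u$ --- forces $d\mu=0$, the converse being immediate (if $\mu$ is constant then $\tlmux_i = x_i + \mu Y_i = x_i - \mu\sum B^j_i x_j$, which is a tangent vector of $x$ at $u$, so the two tangent planes coincide, and local strong convexity of $\tlmux$ for constant $\mu$ is then easy to check from the resulting Gauss formula since, at least for small $\mu$, $(G_{ij}-\mu B_{ij})$ stays definite). So assume (2) holds. First I would differentiate \eqref{paral hyps}: using the affine Gauss formula \eqref{gauss-y} and the affine Weingarten formula \eqref{yi}, one gets
\be
\tlmux_i = x_i + \mu_i Y + \mu Y_i = \sum_j\bigl(\delta^j_i - \mu B^j_i\bigr)x_j + \mu_i Y,
\ee
where $\mu_i = e_i(\mu)$.

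The key step is then to read off what parallelism of the tangent planes says about this expression. Since $x_{*u}(T_uM^n)$ is spanned by $x_1,\dots,x_n$ and $Y(u)$ is transversal to it, the vector $\tlmux_i$ lies in (the translate of) $x_{*u}(T_uM^n)$ exactly when its $Y$-component vanishes, i.e. when $\mu_i=0$. For the parallelism condition I must be a little careful: a priori the plane $\tlmux_{*u}(T_uM^n)$ could be parallel to $x_{*u}(T_uM^n)$ without each individual $\tlmux_i$ lying in it. However, the $n$ vectors $\tlmux_1,\dots,\tlmux_n$ span $\tlmux_{*u}(T_uM^n)$ (this uses that $\tlmux$ is an immersion, part (1)), so that plane is parallel to $x_{*u}(T_uM^n)$ iff every $\tlmux_i$ is a linear combination of $x_1,\dots,x_n$, which by the displayed formula is iff $\mu_i=0$ for all $i$. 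Hence $d\mu=0$ on all of $M^n$, so $\mu$ is locally constant; if $M^n$ is connected, $\mu$ is a genuine constant (otherwise the statement holds component-wise, which is presumably the intended reading).

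The main --- and really only --- obstacle is the transversality bookkeeping just described: one must justify that $Y(u)\notin x_{*u}(T_uM^n)$ (immediate from non-degeneracy, since $\vol(x_1,\dots,x_n,Y)=H^{1/(n+2)}\neq 0$ by \eqref{y-y}) and that $\tlmux_1,\dots,\tlmux_n$ really do span the full tangent plane of $\tlmux$ (which is just the immersion hypothesis in part (1) of the definition). Once the direct-sum decomposition $x^*TA^{n+1} = x_*(TM^n)\oplus\bbr\cdot Y$ is used to extract the $Y$-component of $\tlmux_i$, everything is a one-line computation. I would also remark that the argument shows condition (1) in Definition \ref{prlhyps} is not even needed for the forward direction beyond knowing $\tlmux$ is an immersion, while it is what is being verified (for small enough constant $\mu$, or along the relevant interval of parameters) in the reverse direction.
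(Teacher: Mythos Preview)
Your proof is correct and follows exactly the paper's approach: differentiate \eqref{paral hyps} to obtain $\tlmux_i=\sum(\delta^j_i-\mu B^j_i)x_j+\mu_iY$, then use transversality of $Y$ to $x_*(TM^n)$ to conclude that parallelism of tangent planes is equivalent to $\mu_i=0$ for all $i$. The paper's argument is just a terser version of yours; the extra bookkeeping you supply (transversality from \eqref{y-y}, the span argument, connectedness, and the remark on local strong convexity for the converse) is all sound and simply fills in details the paper leaves implicit.
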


Indeed, from \eqref{paral hyps}, it is direct that
\be\label{olxi}
\tlmux_i=x_i+\mu_iY+\mu Y_i=\sum (\delta^j_i-\mu B^j_i)x_j+\mu_iY,\quad\forall\,i.
\ee
Thus $\tlmux$ is affine parallel to $x$ if and only if
$\mu_i=0$ for each $i$, that is, $\mu$
is a constant.

Now we are concerned with the equiaffine geometry, so the following definition is more relevant here:

\begin{dfn}\label{equiprlhyps}
A hypersurface $\tlmux:M^n\to A^{n+1}$ defined by \eqref{paral hyps} is called equiaffine parallel to $x:M^n\to A^{n+1}$ if

(1) $\tlmux$ is affine parallel to $x$;

(2) the affine normal line of $\tlmux$ is parallel to that of $x$ everywhere on $M^n$.
\end{dfn}

\begin{rmk}\rm
(i) It seems natural to seek conditions under which either of the conditions (1) and (2) in Definition \ref{equiprlhyps} will imply the other.

(ii) For relatively (affine) parallel hypersurfaces, we refer the readers to \cite{s-k-d}, \cite{s-k2}, \cite{s-k3} where, by means of the standard Euclidean metric on the ambient space and support functions, the authors obtain many facts on the relatively parallel hypersurfaces, including some curvature conditions for relatively parallel hypersurfaces to have parallel (equi-)affine normal lines.
\end{rmk}

In what follows, we denote by $\tlmut\equiv(\tlmut^i_j)$ the matrix with $\tlmut^i_j=\delta^i_j-\mu B^i_j$. Then, by \eqref{olxi}, $\tlmux$ is affine parallel to $x$ is and only if
\be\label{muxi}
\tlmux_i=\sum\tlmut^j_ix_j,\quad \forall\,i.
\ee

\begin{prop}\label{prop3.1}
Let the hypersurface $\tlmux:M^n\to A^{n+1}$ given in \eqref{paral hyps} be affine parallel to $x$. Then $\tlmux$ is equiaffine parallel to $x$ if and only if there is a constant $c\in \bbr$ depending on $\mu$ such that, the affine normal vector $\tlmuy(u)=cY(u)$ for every $u\in M^n$.
\end{prop}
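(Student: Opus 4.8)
The plan is to extract the constancy of the proportionality factor directly from the two affine Weingarten formulas \eqref{yi} --- one for $x$ and one for the parallel hypersurface $\tlmux$ --- combined with a single structural observation: since $\tlmux$ is affine parallel to $x$, the tangent subspace $\tlmux_{*u}(T_uM^n)$ equals $x_{*u}(T_uM^n)$ inside $\bbr^{n+1}$ for every $u$, and (by non-degeneracy of $x$ and \eqref{y-y}) $\{x_1(u),\cdots,x_n(u),Y(u)\}$ is a basis of $\bbr^{n+1}$; this basis is therefore adapted to both hypersurfaces simultaneously, which lets me compare tangential and $Y$-directional components without ambiguity. The ``if'' direction is then immediate: if $\tlmuy=cY$ with $c$ a constant --- necessarily nonzero, since the affine normal vector of a non-degenerate hypersurface never vanishes --- then at each $u$ the affine normal line of $\tlmux$, namely the line through $\tlmux(u)$ spanned by $\tlmuy(u)=cY(u)$, is parallel to the affine normal line of $x$ at $u$; together with the standing assumption that $\tlmux$ is affine parallel to $x$, this verifies both conditions of Definition \ref{equiprlhyps}.

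For the converse, suppose $\tlmux$ is equiaffine parallel to $x$. Then for each $u$ the vector $\tlmuy(u)$ is a nonzero multiple of $Y(u)$, so we may write $\tlmuy=cY$ for some smooth nowhere-vanishing function $c$ on $M^n$, and all that remains to be proved is that $c$ is constant. I would differentiate this identity along $e_i$: by the affine Weingarten formula \eqref{yi} for $x$ we get $\tlmuy_i=c_iY+cY_i=c_iY-c\sum_jB^j_ix_j$, so in the basis $\{x_1,\cdots,x_n,Y\}$ the $Y$-component of $\tlmuy_i$ is exactly $c_i$. On the other hand, the affine Weingarten formula for $\tlmux$ shows $\tlmuy_i$ is tangent to $\tlmux$, hence --- since $\tlmux$ and $x$ have the same tangent plane at $u$, equivalently by \eqref{muxi} --- lies in $\spn_{\bbr}\{x_1(u),\cdots,x_n(u)\}$ and has vanishing $Y$-component. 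Comparing the two expressions forces $c_i=0$ for all $i$, so $c$ is constant on the connected manifold $M^n$; of course its value depends on the choice of $\mu$ through \eqref{paral hyps}.

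I do not expect a genuine obstacle here; the only delicate point is a bit of frame bookkeeping. The affine Weingarten formula for $\tlmux$ is most naturally stated in an affine Darboux frame adapted to $\tlmux$, so I would note explicitly that affine parallelism permits working throughout with the single frame $\{e_1,\cdots,e_n\}$ --- or, more economically, I would invoke only that $\tlmuy_i$ is tangent to $\tlmux$ and that $\tlmux$ and $x$ share a tangent plane at each point, which is all the argument actually consumes. A more computational alternative would be to express $\tlmuy$ and $Y$ through their respective last Darboux frame vectors via \eqref{y-y} and compare, but the Weingarten-formula route above is shorter and keeps the geometry transparent.
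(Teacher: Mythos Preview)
Your proof is correct and follows essentially the same route as the paper: both arguments differentiate the relation $\tlmuy=cY$ and use the Weingarten formula for $\tlmux$ (together with the fact that $\tlmux$ and $x$ share tangent planes) to see that $\tlmuy_i$ has no $Y$-component, forcing $c_i=0$. The paper's version is longer only because it first writes the general decomposition $\tlmuy=\sum a^ix_i+cY$ and records several auxiliary identities (namely \eqref{mutg=cmug}, \eqref{bi}, \eqref{muhandh}, \eqref{c}) that are not needed for this proposition itself but are used downstream in Corollary~\ref{cor3.2} and Proposition~\ref{prop3.5}.
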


\begin{proof}
The assumption that $\tlmux$ is affine parallel to $x$ is, by Corollary \ref{cor3.1}, equivalent to that $\mu$ is constant.

Suppose that $\tlmuy=\sum a^ix_i+cY$ for some functions $a^i, c$ on $M^n$. Then, by \eqref{muxi} and the affine Gauss formula, we directly compute
\begin{align}
\tlmux_{ij}=&\left(\sum\tlmut^k_ix_k\right)_j=\sum (\tlmut^k_i)_jx_k+\sum\tlmut^k_ix_{kj}\nnm\\
=&\sum(\tlmut^k_i)_jx_k +\sum\tlmut^l_i\left(\sum\Gamma^k_{lj}x_k+G_{lj}Y\right)\nnm\\ =&\sum\left((\tlmut^k_i)_j +\sum\tlmut^l_i\Gamma^k_{lj}\right)x_k +\sum\tlmut^k_iG_{kj}Y.
\end{align}
On the other hand,
\begin{align}
\tlmux_{ij}=&\sum\tlmugm^k_{ij}\tlmux_k+\tlmug_{ij}\tlmuy\nnm\\
=&\sum\tlmugm^l_{ij}\left(\sum \tlmut^k_lx_k\right) +\tlmug_{ij}\left(\sum a^kx_k+cY\right)\nnm\\
=&\sum\left(a^k\tlmug_{ij}+\sum\tlmugm^l_{ij} \tlmut^k_l\right)x_k +c\,\tlmug_{ij}Y.
\end{align}
Comparing the above two equalities we obtain for any $i,j$
\be\label{mutg=cmug}
(\tlmut^k_i)_j +\sum\tlmut^l_i\Gamma^k_{lj} =a^k\tlmug_{ij}+\sum\tlmugm^l_{ij}\tlmut^k_l,\quad \forall\,k,\quad \sum\tlmut^k_iG_{kj}=c\,\tlmug_{ij}.
\ee

Moreover, we have
\begin{align}
\tlmuy_i=&\left(\sum a^jx_j+cY\right)_i=\sum (e_i(a^j)x_j +a^jx_{ji})+c_iY+cY_i\nnm\\
=&\sum \left(e_i(a^j)+\sum a^k\Gamma^j_{ki}-cB^j_i\right)x_j +\left(\sum a^jG_{ji}+c_i\right)Y,
\end{align}
and
\be
\tlmuy_i=-\sum\tlmub^j_i\tlmux_j =-\sum\tlmub^k_i\tlmut^j_kx_j.
\ee
So it holds that
\be\label{bi}
e_i(a^j)+\sum a^k\Gamma^j_{ki}-cB^j_i+\sum\tlmub^k_i\tlmut^j_k=0,\quad \sum a^jG_{ji}+c_i=0.
\ee

Furthermore, by the second formula of \eqref{y-y}, it holds that
\begin{align}
\tlmuh^{\fr1{n+2}} =&\vol(\tlmux_1,\cdots,\tlmux_n,\tlmuy)\nnm\\ =&\vol\left(\sum\tlmut^i_1x_i,\cdots,\sum\tlmut^i_nx_i,\sum a^ix_i+cY\right)\nnm\\
=&\det(\tlmut)cH^{\fr1{n+2}}.\label{muhandh}
\end{align}

Since $G_{ij}=H^{-\fr1{n+2}}h_{ij}$ and $\tlmug_{ij}=\tlmuh^{\fr1{n+2}}\tlmu{h}_{ij}$, implying
\be\label{detg}
\det G=H^{\fr2{n+2}},\quad \det\tlmug=\tlmuh^{\fr2{n+2}},
\ee
it is easily seen from the second equality of \eqref{mutg=cmug} that $c^n\tlmuh^{\fr2{n+2}}=\det(\tlmut)H^{\fr2{n+2}}$. This last equality together with \eqref{muhandh} gives
\be\label{c}
c^{n+2}\det(\tlmut)=1,\quad c^{n+1}=\left(\fr{H}{\tlmuh}\right)^{\fr1{n+2}},\quad
\det(\tlmut)=\left(\fr{H}{\tlmuh}\right)^{-\fr1{n+1}}.
\ee

Apparently, we only need to prove the necessity part of the proposition. Suppose that $\tlmux$ is equiaffine to $x$. Then $\tlmuy$ is parallel to $Y$ which is equivalent to that $a^i=0$, $\forall\,i$. This with the second equality of \eqref{bi} shows that $\tlmuy=cY$ with $c$ being a constant.
\end{proof}

Denote by $L_r$ the $r$-th normalized elementary symmetric functions of the affine principal curvatures $\lambda_1,\cdots,\lambda_n$:
$$
L_r=\fr1{C^r_n}\sum_{1\leq i_1<\cdots<i_r\leq n} \lambda_{i_1}\cdots\lambda_{i_r},\quad 1\leq r\leq n.
$$
Then the following corollary is direct from Proposition \ref{prop3.1}, \eqref{detg} and \eqref{c}:

\begin{cor}\label{cor3.2}
For an hypersurface $\tlmux$ that is affine parallel to $x$, the following four conditions are equivalent to each other:

(1) $\tlmux$ is equiaffine parallel to $x$;

(2) $\det(\tlmut)$ is constant, which is equivalent to that
\be\label{dett=const}
nL_1-C^2_n\mu L_2+\cdots+(-1)^nC^{n-1}_n\mu^{n-2}L_{n-1}+(-1)^{n+1} \mu^{n-1}K=const;
\ee

(3) The ratio $\fr{H}{\tlmuh}$ of the functions $H$ and $\tlmuh$ is constant;

(4) The ratio $\fr{\det G}{\det\tlmug}$ of the squared volumes $\det G$ and $\det\tlmug$ is constant.
\end{cor}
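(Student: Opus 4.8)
The plan is to derive all four equivalences from the formulas already established in the proof of Proposition~\ref{prop3.1}, so that Corollary~\ref{cor3.2} becomes essentially an unwinding of \eqref{detg} and \eqref{c} together with the characterization of equiaffine parallelism given in Proposition~\ref{prop3.1}. Throughout, $\mu$ is a fixed constant (that is the standing hypothesis ``$\tlmux$ is affine parallel to $x$'', by Corollary~\ref{cor3.1}).

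First I would establish the chain $(1)\Leftrightarrow(3)\Leftrightarrow(4)\Leftrightarrow(2)$ in a convenient order. The equivalence $(3)\Leftrightarrow(4)$ is immediate from \eqref{detg}: $\det G/\det\tlmug=(H/\tlmuh)^{2/(n+2)}$, so one ratio is constant iff the other is. Next, $(2)\Leftrightarrow(3)$: the third formula of \eqref{c} reads $\det(\tlmut)=(H/\tlmuh)^{-1/(n+1)}$, which shows directly that $\det(\tlmut)$ is constant iff $H/\tlmuh$ is constant. For $(1)\Leftrightarrow(3)$ (equivalently $(1)\Leftrightarrow(2)$), I would argue as follows. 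If $\tlmux$ is equiaffine parallel to $x$, then by Proposition~\ref{prop3.1} we have $\tlmuy=cY$ with $c$ a constant, and then the second equation of \eqref{c}, $c^{n+1}=(H/\tlmuh)^{1/(n+2)}$, forces $H/\tlmuh$ to be constant; conversely, if $H/\tlmuh$ is constant then by \eqref{c} both $c$ and $\det(\tlmut)$ are constant, and retracing the computation in the proof of Proposition~\ref{prop3.1}: the function $c$ appearing in the decomposition $\tlmuy=\sum a^ix_i+cY$ is exactly the coefficient determined by $\tlmuh^{1/(n+2)}=\det(\tlmut)\,c\,H^{1/(n+2)}$ via \eqref{muhandh}, and constancy of $c$ together with the second equation of \eqref{bi}, $\sum a^jG_{ji}+c_i=0$, gives $\sum a^jG_{ji}=0$ for all $i$, hence $a^i=0$ since $G$ is nondegenerate; thus $\tlmuy=cY$ with $c$ constant, which is equivalent to equiaffine parallelism. (Here one must be slightly careful that $c$ from \eqref{muhandh} does not depend on a choice of sign/branch; since $x$ and $\tlmux$ are both locally strongly convex and coherently oriented, $c>0$ and the formula $c^{n+1}=(H/\tlmuh)^{1/(n+2)}$ pins it down unambiguously.)

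It remains to verify the algebraic identity in $(2)$, namely that $\det(\tlmut)$ equals the polynomial in $\mu$ displayed in \eqref{dett=const}. Since $\tlmut^i_j=\delta^i_j-\mu B^i_j$, we have $\det(\tlmut)=\det(I-\mu B)=\sum_{r=0}^n(-\mu)^r\sigma_r(\lambda_1,\dots,\lambda_n)$, where $\sigma_r$ is the $r$-th elementary symmetric function of the affine principal curvatures. Rewriting $\sigma_r=C^r_n L_r$ (with $L_0=1$, $\sigma_n=K$ by definition of the affine Gauss–Kronecker-type curvature, $L_1$ the affine mean curvature), one gets
\[
\det(\tlmut)=1-nL_1\mu+C^2_nL_2\mu^2-\cdots+(-1)^n\mu^nK.
\]
Up to an overall sign and a harmless additive constant (the ``$1$''), constancy of this quantity is exactly the stated condition \eqref{dett=const}; I would simply remark that \eqref{dett=const} is, up to multiplication by $-1/\mu$ when $\mu\neq 0$ and a constant shift, the same as ``$\det(\tlmut)$ is constant'', and note separately that when $\mu=0$ the statement is vacuous ($\tlmux=x$). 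One small subtlety to flag: the displayed polynomial in \eqref{dett=const} is $\det(\tlmut)$ with the constant term removed and then divided by $-\mu$, so the claimed equivalence ``$\det(\tlmut)=\mathrm{const}$ iff \eqref{dett=const}'' should be read with that normalization in mind; it is harmless but worth a sentence.

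The main obstacle I anticipate is not any hard analysis but the bookkeeping in the converse direction of $(1)\Leftrightarrow(3)$: one must recognize that the single scalar $c$ has two a priori different incarnations in the proof of Proposition~\ref{prop3.1} — as the $Y$-component of $\tlmuy$ and as the factor relating the volume functions — and that these coincide, so that controlling $H/\tlmuh$ really does control the $Y$-component of $\tlmuy$ and hence (via \eqref{bi}) kills the tangential part $a^i$. Everything else is linear algebra ($\det(I-\mu B)$ and symmetric functions) and substitution into identities already proved.
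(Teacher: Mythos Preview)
Your argument is correct and follows exactly the route the paper intends: the paper simply states that the corollary is ``direct from Proposition~\ref{prop3.1}, \eqref{detg} and \eqref{c}'', and your chain $(3)\Leftrightarrow(4)$ via \eqref{detg}, $(2)\Leftrightarrow(3)$ via the third equation of \eqref{c}, and $(1)\Leftrightarrow(3)$ via Proposition~\ref{prop3.1} together with the second equations of \eqref{c} and \eqref{bi} is precisely the unpacking of that remark. Your handling of the converse $(3)\Rightarrow(1)$ and of the normalization in \eqref{dett=const} are both fine; the only thing worth noting is that the identities \eqref{muhandh}--\eqref{c} in the proof of Proposition~\ref{prop3.1} were derived for the general decomposition $\tlmuy=\sum a^ix_i+cY$ without assuming $a^i=0$, so there is no circularity in invoking them for the converse direction.
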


From Proposition \ref{prop3.1}, we clearly have

\begin{cor}\label{cor3.3}
$\tlmux$ is equiaffine parallel to $x$ if and only if $x$ is equiaffine parallel to $\tlmux$.

\em{If it is the case, we shall say that $x$ and $\tlmux$ are equiaffine parallel to each other.}
\end{cor}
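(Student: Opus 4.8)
The plan is to read off the symmetry directly from Proposition \ref{prop3.1}. Suppose $\tlmux$ is equiaffine parallel to $x$. By Corollary \ref{cor3.1} the function $\mu$ is a constant, and by Proposition \ref{prop3.1} there is a constant $c\in\bbr$ with $\tlmuy(u)=cY(u)$ for all $u\in M^n$. Moreover, by the first identity in \eqref{c} we have $c^{n+2}\det(\tlmut)=1$, so in particular $c\neq 0$.

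Next I would solve \eqref{paral hyps} for $x$. From $\tlmux=x+\mu Y$ and $Y=c^{-1}\tlmuy$ we get
\be
x(u)=\tlmux(u)-\mu Y(u)=\tlmux(u)+\nu\,\tlmuy(u),\qquad \nu:=-\mu/c,
\ee
and $\nu$ is a constant since $\mu$ and $c$ are. Thus $x$ is a hypersurface obtained from $\tlmux$ by the same construction \eqref{paral hyps} with base $\tlmux$ and (constant) displacement function $\nu$.

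It then remains to verify the two defining conditions of Definition \ref{equiprlhyps} for $x$ relative to $\tlmux$. Condition (1): $x$ is affine parallel to $\tlmux$ — indeed $x$ is locally strongly convex by hypothesis, and the tangent spaces $x_{*u}(T_uM^n)$ and $\tlmux_{*u}(T_uM^n)$ are parallel in $A^{n+1}$, this being exactly the symmetric relation already assumed (equivalently, the displacement function $\nu$ is constant, which invokes Corollary \ref{cor3.1} in the other direction). Condition (2): the affine normal line of $x$ is parallel to that of $\tlmux$ — since $Y=c^{-1}\tlmuy$ with $c\neq 0$, the affine normal directions of $x$ and $\tlmux$ coincide, hence so do the affine normal lines up to translation. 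Therefore $x$ is equiaffine parallel to $\tlmux$. Interchanging the roles of $x$ and $\tlmux$ gives the converse, and this proves the equivalence. I do not expect any genuine obstacle here; the only point requiring care is recording that $c\neq 0$ (guaranteed by \eqref{c}) so that $\tlmuy$ may legitimately be used in place of $Y$, and that the resulting displacement $\nu=-\mu/c$ is again constant.
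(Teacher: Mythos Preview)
Your argument is correct and is exactly the approach the paper intends: the corollary is stated as an immediate consequence of Proposition \ref{prop3.1}, and you have simply written out the details (using $\tlmuy=cY$ with $c\neq 0$ from \eqref{c} to rewrite $x=\tlmux+\nu\,\tlmuy$ with constant $\nu=-\mu/c$). Nothing further is needed.
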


\begin{prop}\label{prop3.5}
If a locally strongly convex hypersurfaces is of constant affine principal curvature, then all of its equiaffine parallel hypersurfaces are also of constant affine principal curvature.
\end{prop}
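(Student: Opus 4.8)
The plan is to derive an explicit algebraic relation between the affine shape operator $\tlmub$ of $\tlmux$ and the affine shape operator $B$ of $x$, and then to read off the affine principal curvatures of $\tlmux$ directly from those of $x$. First I would note that, since $\tlmux$ is equiaffine parallel to $x$, Corollary \ref{cor3.1} forces the function $\mu$ to be a constant, and Proposition \ref{prop3.1} provides a constant $c\in\bbr$ (depending on $\mu$) with $\tlmuy=cY$ everywhere on $M^n$; moreover the matrix $\tlmut=(\tlmut^i_j)=(\delta^i_j-\mu B^i_j)$ is non-singular, since $\tlmux$ is locally strongly convex (equivalently, by the first identity in \eqref{c}).

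The key step is to differentiate the relation $\tlmuy=cY$. On the one hand, \eqref{yi} applied to $x$ gives $\tlmuy_i=cY_i=-c\sum_jB^j_ix_j$. On the other hand, \eqref{yi} applied to $\tlmux$ together with \eqref{muxi} gives
\[
\tlmuy_i=-\sum_j\tlmub^j_i\tlmux_j=-\sum_k\Big(\sum_j\tlmut^k_j\tlmub^j_i\Big)x_k.
\]
Since $\tlmux$ is affine parallel to $x$, the vectors $x_1,\dots,x_n$ form a common basis of the (parallel) tangent spaces, so comparing coefficients yields the operator identity $\tlmut\,\tlmub=cB$, i.e.
\[
\tlmub=c\,\tlmut^{-1}B=c\,(\id-\mu B)^{-1}B
\]
as a field of endomorphisms along $M^n$.

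To finish, I would observe that $(\id-\mu B)^{-1}$ and $B$ are both rational functions of the single operator $B$, hence are simultaneously diagonalizable; therefore at each point the eigenvalues of $\tlmub$ are
\[
\tlmulmbd_i=\frac{c\,\lambda_i}{1-\mu\lambda_i},\qquad i=1,\dots,n,
\]
where $\lambda_1,\dots,\lambda_n$ are the affine principal curvatures of $x$, and $1-\mu\lambda_i\ne0$ because $\det\tlmut=\prod_i(1-\mu\lambda_i)\ne0$. If $x$ has constant affine principal curvatures, then, $\mu$ and $c$ being constants, each $\tlmulmbd_i$ is a constant, which is precisely the assertion.

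I do not anticipate a serious obstacle: everything rests on the identity $\tlmut\,\tlmub=cB$, whose only delicate points — the bookkeeping of indices and the invertibility of $\tlmut$ — are already contained in the computations preceding Corollary \ref{cor3.2}. The one thing worth double-checking is that $c$ is genuinely a constant (not merely locally constant), but this is exactly what Proposition \ref{prop3.1} guarantees.
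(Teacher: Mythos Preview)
Your proof is correct and follows essentially the same route as the paper. The paper obtains the identity $\sum_k\tlmub^k_i\tlmut^j_k=cB^j_i$ by specializing the earlier formula \eqref{bi} (from the proof of Proposition~\ref{prop3.1}) to the equiaffine parallel case $a^i=0$, then diagonalizes $B$ in a suitable frame to read off $\tlmulmbd_i=\dfrac{c\lambda_i}{1-\mu\lambda_i}$; you re-derive the same identity by differentiating $\tlmuy=cY$ directly and then argue the diagonalization via commutativity of $B$ and $(\id-\mu B)^{-1}$, which is the same step in slightly more operator-theoretic language.
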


\begin{proof}
Since $\tlmuy$ is parallel to $Y$ for all $\mu$, the second equality of $\eqref{bi}$ becomes \be\label{mub-cb}\sum\tlmub^k_i\tlmut^j_k=cB^j_i.\ee
This with the second equality of \eqref{mutg=cmug} gives
$$
c\,\tlmub_{ij}=c\sum\tlmub^k_i\tlmug_{kj} =\sum\tlmub^k_i\tlmut^l_kG_{lj} =c\sum B^k_iG_{kj}=cB_{ij}.
$$
So \eqref{mub-cb} is equivalent to that
\be\label{mub=b}\tlmub_{ij}=B_{ij}.\ee

Now choose a suitable frame field $\{e_1,\cdots,e_n\}$ such that $B^j_i=\lambda_i\delta^j_i$. Then $$\tlmut^j_k=(1-\mu\lambda_k)\delta^j_k.$$
Putting this into \eqref{mub-cb} we obtain
$$
\tlmub^j_i(1-\mu\lambda_j) =\sum\tlmub^k_i(1-\mu\lambda_k)\delta^j_k =c\lambda_i\delta^j_i,
$$
implying
$$
\tlmub^j_i=\fr{c\lambda_i}{1-\mu\lambda_i}\delta^j_i.
$$
Thus $e_1,\cdots,e_n$ are also the eigen-vectors of the affine Weingarten map $\tlmub$ of $\tlmux$, and the corresponding affine principal curvatures of $\tlmux$ are, respectively, \be\label{mulmbd}
\tlmulmbd_i=\fr{c\lambda_i}{1-\mu\lambda_i},\quad \forall\,i.
\ee
The conclusion of the proposition follows easily, since $c$ is constant along $\tlmux$.
\end{proof}

Motivated by the isoparametric hypersurfaces in Riemannian manifolds, we naturally introduce the equiaffine isoparametric hypersurfaces as follows:

\begin{dfn}\label{isop hyps}
A locally strongly convex hypersurface $x:M\to A^{n+1}$ is called equiaffine isoparametric if it is among a family of equiaffine parallel hypersurfaces that are of constant affine mean curvature.
\end{dfn}

\begin{expl}\label{expl1 s3-2}\rm
All equiaffine hyperspheres are equiaffine isoparametric hypersurfaces. In fact, the reason of this is rather simple (see Theorem \ref{main2}).
\end{expl}

More broadly, due to the same reason, we have

\begin{expl}\label{expl2 s3-2}\rm
All equiaffine homogeneous hypersurfaces are equiaffine isoparametric hypersurfaces.
\end{expl}

\begin{rmk}\rm So, two questions may be asked: Firstly, are there any equiaffine isoparametric hypersurfaces in $A^{n+1}$ other than the affine hyperspheres or, more broadly, other than the equiaffine homogeneous hypersurfaces? Secondly, if either of the answers is positive, then the corresponding classification problem arises naturally which seems interesting!
\end{rmk}

\section{Equiaffine isoparametric functions and their regular level sets}\label{s4}
First of all, we recall the definition of isoparametric functions on a Riemannian manifold.

Let $(N,\td g)$ be a Riemannian manifold, and $F$ be a non-constant smooth function on $N$ with the gradient $\td\nabla^{\td g} F$ and the Laplacian $\td\Delta_{\td g} F$. Denote $J=F(N)\subset\bbr$. Then, according to Q. M. Wang (\cite{wqm87}), $F$ is called an isoparametric function if there exist smooth functions $a(t)$ and $b(t)$ of one variable $t$, $t\in J$, such that
\be
|\td\nabla^{\td g} F|_{\td g}=a(F),\quad \td\Delta_{\td g}=b(F).
\ee

Define
\be\label{y}
N_*=\{x\in N;\ dF\neq 0\},\quad \xi=\fr1a\td\nabla^{\td g} F.
\ee
Then $\xi$ is a globally defined smooth unit vector field on $N_*$, of which the orthogonal complement $\xi^\bot$ is a distribution on $N_*$. Furthermore, we have the following well-known conclusion (\cite{cart39}, \cite{wqm87})

\begin{prop}\label{prop4.1}
Let $F$ be an isoparametric function on $(N,\td g)$. Then

(1) Each integral curves $\gamma_\xi$ of $\xi$ is a unit speed geodesic with the arc-length being its parameter;

(2) The distribution $\xi^\bot$ is integrable, generating a foliation $\mathcal F$ of $N_*$ by a family of hypersurfaces of constant mean curvature;

(3) Each geodesic $\gamma_\xi$ intersects any hypersurface in $\mathcal F$ (orthogonally) once and only once;

(4) Every two hypersurfaces in $\mathcal F$ are of equi-distance, that is, all the geodesics $\gamma_\xi$ restricted between these two hypersurfaces are of the same length. In particular, hypersurfaces in $\mathcal F$ are a family of parallel ones and can be parametrized or labeled by the arc-length parameter.
\end{prop}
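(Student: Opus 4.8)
Since this is a classical fact (due to Cartan and Q.~M.~Wang in the cited references), the plan is to reconstruct the standard argument, reducing everything to two elementary observations about the isoparametric function $F$ restricted to $N_*$: the field $V:=\td\nabla^{\td g}F$ is a gradient, so its covariant differential $\hess F$ is symmetric, and $|V|_{\td g}^2=a(F)^2$ and $\td\Delta_{\td g}F=b(F)$ are functions of $F$ alone. Granting these, assertion (1) follows once I show $\td\nabla_\xi\xi=0$ on $N_*$. I would obtain this by differentiating $|V|_{\td g}^2=a(F)^2$ in an arbitrary direction $X$: using $X(F)=\td g(V,X)$ and the symmetry of $\hess F$,
$$\td g(\td\nabla_V V,X)=\hess F(V,X)=\hess F(X,V)=\tfrac12\,X\!\left(|V|_{\td g}^2\right)=a(F)a'(F)\,\td g(V,X)$$
for every $X$, so $\td\nabla_V V=a(F)a'(F)\,V$. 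As $V\!\left(1/a(F)\right)=-a'(F)a(F)^{-2}|V|_{\td g}^2=-a'(F)$, a one-line computation then gives
$$\td\nabla_\xi\xi=\td\nabla_{\tfrac1a V}\!\Bigl(\tfrac1a V\Bigr)=\tfrac1a\Bigl(V(\tfrac1a)\,V+\tfrac1a\,\td\nabla_V V\Bigr)=\tfrac1a\bigl(-a'\,V+a'\,V\bigr)=0.$$
Since $|\xi|_{\td g}\equiv1$, the integral curves of $\xi$ are unit-speed geodesics, proving (1).

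For (2): on $N_*$ the function $F$ has no critical point, so the connected components of its level sets are embedded hypersurfaces whose common tangent distribution is exactly $\xi^\bot=\ker dF$; hence $\xi^\bot$ is integrable and these level hypersurfaces constitute the foliation $\mathcal F$. Because $\td\nabla_\xi\xi=0$, the restriction of $\dv\xi$ to a leaf $F^{-1}(c)$ equals, up to the factor $n$ and a sign, its mean curvature with respect to the unit normal $\xi$; and
$$\dv\xi=\dv\!\left(\tfrac1{a(F)}\,V\right)=\tfrac1{a(F)}\,\td\Delta_{\td g}F-\tfrac{a'(F)}{a(F)^2}\,|V|_{\td g}^2=\tfrac{b(F)}{a(F)}-a'(F)$$
is a function of $F$ (the precise sign convention for $\td\Delta_{\td g}$ being irrelevant here), hence constant on each leaf, which gives the constancy of the mean curvature asserted in (2).

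For (3) and (4) I would follow the flow of $\xi$. Along an integral curve $\gamma_\xi$ one has $\tfrac{d}{ds}F(\gamma_\xi(s))=dF(\xi)=|V|_{\td g}^2/a(F)=a(F)>0$ on $N_*$, so $F$ is strictly increasing along $\gamma_\xi$; hence $\gamma_\xi$ meets each leaf of $\mathcal F$ at most once, and it does so orthogonally since $\gamma_\xi'=\xi$ is the unit normal. Reparametrizing by $t=F$ gives $ds/dt=1/a(t)$, so the length of the arc of $\gamma_\xi$ between $F^{-1}(c_1)$ and $F^{-1}(c_2)$ equals $\int_{c_1}^{c_2}dt/a(t)$, independently of the chosen geodesic; this is exactly the statement that any two leaves are equidistant, that $\mathcal F$ is a parallel family, and that $s(c)=\int^{c}dt/a(t)$ furnishes an arc-length label for the leaves, i.e.\ (4).

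The step requiring the most care is the ``only once'' in (3) together with the global coherence of $\mathcal F$: strict monotonicity of $F$ along $\gamma_\xi$ already gives ``at most once'', but to upgrade this to ``exactly once'' --- and to present $N_*$ (or a connected saturated open piece of it) as the product of a single leaf with an interval --- one must follow $\gamma_\xi$ as long as it stays in $N_*$, control its $F$-image via $\int dt/a(t)$, and then invoke the normal exponential map of a fixed leaf; this is where completeness-type assumptions enter the classical treatments and where the precise meaning of ``all leaves of $\mathcal F$'' must be fixed.
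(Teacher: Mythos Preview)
The paper does not supply a proof of this proposition at all: it is stated as a ``well-known conclusion'' with citations to \cite{cart39} and \cite{wqm87}, and then used without further argument. Your reconstruction is the standard proof and is correct; the key computations $\td\nabla_VV=aa'\,V$, $\td\nabla_\xi\xi=0$, and $\dv\xi=b/a-a'$ are all valid, and your caveat about the global ``exactly once'' is appropriate (the paper uses the proposition only locally, so this subtlety is immaterial here).
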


\begin{rmk}\rm Clearly, by Proposition \ref{prop4.1}, the arc-length function $s$ of the geodesics $\gamma_\xi$ well defines a one-form $ds$ globally on $N_*$.

Each hypersurface of the foliation $\mathcal F$, with $\xi$ being its unit normal, was originally named to be a isoparametric hypersurface of $(N,\td g)$.
\end{rmk}

Analysis of the equiaffine parallel hypersurfaces motivates us to introduce the concept of equiaffine isoparametric hypersurfaces as follow:

\begin{dfn}\label{dfnisopfun}
Let $U\subset A^{n+1}$ be a non-empty open domain.
A smooth function $F:U\to\bbr$ is called {\em equiaffine isoparametric}, if the following conditions are satisfied:

(1) There exists a Riemannian metric $\td g$ on $U$ admissible to the volume measure $\vol$, such that $F$ is an (Riemannian) isoparametric function on the Riemannian manifold $(U,\td g)$;

(2) The Hessian $\hess_0 F$ of $F$ with respect to the standard flat connection is semi-negative definite with $\bbr \xi$ being the null space, where $\xi=\fr1a\td\nabla F$;

(3) $\left(\td g+\fr1a\hess_0F\right)|_{\xi^\bot\times\xi^\bot}\equiv 0$.

(4) $2\td\nabla\xi=d\xi-(d\log a)\xi$.
\end{dfn}

\begin{rmk}\rm It is not hard to see that, by using the well-defined one-form $ds$, we can replace conditions (2) and (3) in Definition \ref{dfnisopfun} by a new equation
\be\label{2and3}
\td g+\fr1a\hess_0F=ds^2.
\ee
\end{rmk}

\begin{rmk}\label{rmk4.2}\rm From the definition one easily sees that, if $F$ is an equiaffine isoparametric function then, for any constants $c_1,c_2\in\bbr$ with $c_1>0$, $\td F=c_1F+c_2$ is also an equiaffine isoparametric function. Moreover, $\td F$ and $F$ have the same regular level hypersurfaces. In particular, $\td\xi=\xi$.
\end{rmk}

Now we are in a position to give a proof for our main theorem (Theorem \ref{main1}) in this paper.

{\it Proof of Theorem \ref{main1}}

We first prove the necessity part of the theorem. 

Suppose that $x:M^n\to A^{n+1}$ is an affine isoparametric hypersurfaces with the affine normal vector $Y$, the Blaschke metric $G$ and the affine second fundamental form $B$. Then, by Definition, $x$ is locally strongly convex, or the same, the metric $G$ is positive definite. Furthermore, from Definition \ref{isop hyps} and Corollary \ref{cor3.1}, it follows that there must be a $\delta>0$ such that, for each $\mu\in (-\delta,\delta)$, the hypersurface
$\tlmux=x+\mu Y$ is equiaffine parallel to $x$ with constant affine mean curvature $\tlmul_1$.

Define
$$
U=\{\tlmux(u);\ u\in M^n,\ \mu\in(-\delta,\delta)\}.
$$
Then $U$ is an open neighbourhood of $x(M^n)$, and a Riemannian metric $\td g$ on $U$ can be given as follows:

Let $(u^1,\cdots,u^n)$ be a local coordinate system. Then $(u^1,\cdots,u^n,u^{n+1}:=\mu)$ is a local coordinate system for $U$. In particular, $\pp{}{\mu}=Y$. By Proposition \ref{prop3.1}, the affine normal vector $\tlmuy=cY$ with $c\equiv c(\mu)$, a smooth function of the parameter $\mu\in(-\delta,\delta)$. Using the Blaschke metric $\tlmug=\sum \tlmug_{ij}\omega^i\omega^j$ of $\tlmux$ and the function $c(\mu)$, a Riemannian metric $\td g$ can be introduced on $U$ by
$$
\td g=\sum \tlmug_{ij}\omega^i\omega^j+\fr1{c^2}d\mu^2.
$$
Equivalently speaking, $\td g$ is the Riemannian metric on $U$ which corresponds to a pseudo-product of $\tlmug$ and $d\mu^2$ on $M^n\times (-\delta,\delta)$. For simplicity, we shall always use the index $\mu$ to denote the index $n+1$, say,
$$\td g_{\mu\mu}\equiv\td g_{n+1\,n+1}=\td g(Y,Y)=\fr1{c^2}.$$
From the discussion of Section \ref{s3}, we know that for all $i,j$,
$$\tlmug_{ij}=\fr1c(G_{ij}-\mu B_{ij}),\quad\text{where}\quad G_{ij}=G\left(\pp{}{u^i},\pp{}{u^j}\right),\quad B_{ij}=B\left(\pp{}{u^i},\pp{}{u^j}\right).$$
Denote by $\tlmutdgm^k_{ij}$ the coefficients of the Levi-Civita connection of $\tlmug$. Then we easily find the coefficients $\td\Gamma^C_{AB}$, $\forall\, A,B,C$, of Levi-Civita connection of $\td g$ as follows:
\begin{align}
\td\Gamma^k_{ij}=&\tlmutdgm^k_{ij},\quad \td\Gamma^\mu_{ij}=\fr12cB_{ij},\quad \forall\,i,j,k,\label{tdgamm1}\\ \td\Gamma^i_{\mu j}=&\td\Gamma^i_{j\mu }=-\fr1{2c}\sum \tlmug^{ik}B_{kj}=-\fr1{2c}\,\tlmub^i_j,\quad \forall\,i,j,\label{tdgamm2}\\
\td\Gamma^i_{\mu\mu}=&\td\Gamma^\mu _{i\mu}=\td\Gamma^\mu_{\mu i}=0,\quad\forall\,i,\label{tdgamm3}\\
\td\Gamma^\mu_{\mu\mu}=&-\pp{}{\mu}\log c.\label{tdgamm4}
\end{align}

Define a smooth function $F:U\to\bbr$ by $F(\tlmux(u))=\mu$. Then $F_i=0$, $F_\mu=1$. By using this and \eqref{tdgamm1}$\sim$\eqref{tdgamm4}, we easily find that, with respect to the metric $\td g$ on $U$, the gradient and the Laplacian of $F$ are as follows:
\begin{align}
\td\nabla^{\td g}F=&c\,\tlmuy,\\
\td\Delta_{\td g}F=&\sum \td g^{AB}\left(F_{AB}-F_C\td\Gamma^C_{AB}\right) =-\sum\tlmug^{ij}\td\Gamma^\mu_{ij}-\td g^{\mu\mu}\td\Gamma^\mu_{\mu\mu}\nnm\\ =&-\fr12c\sum\tlmug^{ij}B_{ij}+cc'\equiv-\fr12nc\,\tlmul_1 +cc'.
\end{align}
Since $\tlmul_1$ is constant along $\tlmux(M^n)$, it depends only on the parameter $\mu$. Therefore, $\td\Delta_{\td g}F$ is a function of $F$. So, by the fact that
$$|\td\nabla^{\td g}F|^2_{\td g}=\td g(c\,\tlmuy,c\,\tlmuy)=c^2,$$
$F$ is an isoparametric function on the Riemannian manifold $(U,\td g)$ with
$$a(t)\equiv c(t),\quad b(t)\equiv -\fr12nc(t)\,\tlmul_1+c(t)c'(t).$$
In particular,
$$\xi=\fr1a\nabla^{\td g}F=\tlmuy,\quad s=\int_0^\mu \fr{dt}{c(t)}.$$

Furthermore, the Hessian $\hess_0F(\tlmux(u))$ of $F$ with respect to the standard flat connection $\mathring{\Gamma}^C_{AB}$ can be computed as
\begin{align}
(\hess_0F)_{ij}(\tlmux(u))=&F_{ij} -F_\mu\mathring{\Gamma}^\mu_{ij} =-c\,\tlmug_{ij}(u)=-c\td g_{ij}(u,\mu),\quad\forall\,i,j;\\
(\hess_0F)_{i\mu}(\tlmux(u))=&(\hess_0F)_{\mu i}(\tlmux(u))=(\hess_0F)_{\mu\mu}(\tlmux(u))=0,\quad\forall\,i.
\end{align}
So \eqref{2and3} holds since $a=c$.

On the other hand, since $\fr1c\xi=\fr1c\tlmuy=Y$ is a constant vector along $\mu$-curves which are both geodesics in $(U,\td g)$, with the arc-length $s$, and straight lines in $A^{n+1}$, we find by \eqref{tdgamm1}$\sim$\eqref{tdgamm4} that
\begin{align*}
2\td\nabla_i\xi=&2c\td\nabla_iY=c\sum\td\Gamma^j_{\mu i}\tlmux_j=-\tlmub^j_i\,\tlmux_j=\tlmuy_i =\pp{}{u^i}\xi-\pp{}{u^i}(\log c)\xi,\quad\forall\,i,\\
2\td\nabla_\mu\xi=&2\sum\td\Gamma^i_{\mu\mu}\tlmux_i=0,\quad \pp{}{\mu}\xi-\pp{}{\mu}(\log c)\xi=c'Y-\fr1cc'cY=0.
\end{align*}
So, the condition (4) in Definition \ref{dfnisopfun} is also met by $F$.

Now we have proved that $F$ is an equiaffine isoparametric function. Moreover, it is clear that each $\tlmux$ including $x\equiv {}^0x$ is a regular set of $F$.

Next we prove the sufficiency part of Theorem \ref{main1}.

Suppose that $x:M^n\to A^{n+1}$ is a regular level set of an equiaffine isoparametric function $F$ defined on an open domain $U\subset A^{n+1}$ where, by Definition \ref{dfnisopfun}, $F$ is first all an isoparametric function on the Riemannian manifold $(U,\td g)$ with some Riemannian metric $\td g$. From Proposition \ref{prop4.1}, we know that $x$ is one of the parallel hypersurfaces of constant mean curvature in the foliation $\mathcal F$, for which the unit normal $\xi:=\fr1a\td\nabla F$ is tangent to and parallel along each of its integral curves $\gamma_\xi$, that is, these curves $\gamma_\xi$ all unit-speed geodesics  on $(U,\td g)$ with respect the arc-length parameter $s$. So by Condition (4) in Definition \ref{dfnisopfun}, $Y:=\fr1a\xi$ is constant as $\bbr^{n+1}$-valued function along $\gamma_\xi$. Thus, all the curves $\gamma_\xi$ are straight lines in $A^{n+1}$ and can be parametrized, starting from points on $x(M^n)$, as $\gamma_{\xi(u)}(\mu)=x(u)+\mu Y$, $\mu\in(-\delta,\delta)$ with some $\delta>0$, for each $u\in M^n$.

Consequently, starting from the given hypersurface $x$, the above family of parallel hypersurfaces in $\mathcal F$ can be parametrized by the parameter $\mu$ simply as
\be\label{mux}\tlmux(u)=x(u)+\mu\xi(u),\quad u\in M^n,\ee
for $\mu\in (-\delta,\delta)$.

We shall prove that, for each $\mu\in (-\delta,\delta)$, $\tlmux$ is a locally strictly convex hypersurface with constant affine mean curvature $\tlmul_1$. For doing this, we take the following three steps:

(1) The induced metric on $M^n$ by the immersion $\tlmux:M^n\to (U,\td g)$ is equal to the affine metric $\tlmug$.

In fact, for any tangent frame field $\{e_1,\cdots,e_n\}$ on $M$, let $\lambda=(\det(\tlmux^*\td g_{ij}))^{\fr12(n+2)}$ and $e_{n+1}=\lambda^{-\fr1{n+2}}\xi$. Then by the Lagrange identity and the fact that $\td g$ is admissible to the measure $\vol$, we find
\begin{align*}
&\vol(e_1,\cdots,e_n,e_{n+1})=\vol_{\td g}(e_1,\cdots,e_n,e_{n+1})\\
=&(\det(\td g_{AB}))^{\fr12}
=(\det(\tlmux^*\td g_{ij}))^{\fr12}|e_{n+1}|_{\td g} =(\det(\tlmux^*\td g_{ij}))^{\fr12}\lambda^{-\fr1{n+2}}|\xi|_{\td g}
=1.
\end{align*}
Thus $\{e_1,\cdots,e_n,e_{n+1}\}$ is an affine Darbourx frame field. Moreover, it holds by the definition of the affine metric that
\be\label{sxij}
\tlmux_{ij}=\sum\tlmugm^k_{ij}\tlmux_k+\tlmu{h}_{ij}e_{n+1}
=\sum\tlmugm^k_{ij}\tlmux_k+\tlmug_{ij}\tlmuh^{\fr1{n+2}} \lambda^{-\fr1{n+2}}\xi,\quad\forall\,i,
\ee
with $\tlmuh=\det(\tlmu{h}_{ij})$.

On the other hand, since the hypersurface $\tlmux(M^n)$ is a regular level set of the function $F$, we have that $F(\tlmux^1,\cdots,\tlmux^{n+1})=\const$. Take differentiation of this equation twice, we find that
$$
\sum\ppp{F}{x^A}{x^B}\tlmux^A_i\tlmux^B_j+\sum \pp{F}{x^A}\tlmux^A_{ij}\equiv 0,\quad\forall\,i,j
$$
or
\be\label{hessfij}
\tlmux^*(\hess_0F)_{ij}+\sum F_A\tlmux^A_{ij}\equiv 0,\quad\forall\,i,j
\ee
for each $\mu\in (-\delta,\delta)$.

Now, by using \eqref{sxij}, \eqref{hessfij} and \eqref{2and3}, we obtain that, for all $i,j$,
\begin{align}
\tlmug_{ij}\tlmuh^{\fr1{n+2}}\lambda^{-\fr1{n+2}}=&\td g(\tlmux_{ij},\xi)=\td g(\tlmux_{ij},\fr1a\nabla^{\td g}F)
=\fr1a\sum\td g_{AB} \tlmux^A_{ij}(\nabla^{\td g}F)^B\nnm\\
=\fr1a\sum \tlmux^A_{ij}F_A
=&-\tlmux^*\left(\fr1a\hess_0F\right)_{ij}=x^*(\td g-ds^2)_{ij} =(x^*\td g)_{ij}.\label{tdg=afg}
\end{align}
The equality \eqref{tdg=afg} means that $\tlmug$ is positive definite, that is, $\tlmux$ is locally strongly convex for each $\mu\in (-\delta,\delta)$. Moreover, by the definition of the affine metric $\tlmug$, we have $\det (\tlmug_{ij})=\tlmuh^{\fr2{n+2}}$ (see \eqref{detg}). Using this we take the determinant of \eqref{tdg=afg} and obtain
$$
\tlmuh\lambda^{-\fr n{n+2}}=\det(\tlmug_{ij})\tlmuh^{\fr n{n+2}}\lambda^{-\fr n{n+2}}=\det(x^*(\td g)_{ij})=\lambda^{\fr2{n+2}},
$$
implying that $\lambda=\tlmuh$. Putting this into \eqref{tdg=afg} we have finally proved that $\tlmux^*\td g=\tlmug$.

(2) The affine normal vector $\tlmuy$ of $\tlmux$ at any $u\in M^n$ is equal to $\xi(\tlmux(u))$ for each $\mu\in (-\delta,\delta)$.

In fact, we can choose an affine Darbourx frame $\{e_1,\cdots,e_n,e_{n+1}\}$ such that $e_{n+1}\equiv \xi$. In this case, by the condition (4) in Definition \ref{dfnisopfun}, we have along $\tlmux(M^n)$ that
\be
de_{n+1}=d\xi=2\td\nabla\xi=-2\sum \tlmua^j_i\omega^i\,\tlmux_j\label{den+1},
\ee
where $\tlmua^j_i$ are the components of the Weingarten map of $\tlmux$. This shows that $\omega^{n+1}_{n+1}\equiv 0$. On the other hand, since $\td g(\xi,\xi)\equiv 1$, we can make a choice of $\{e_1,\cdots,e_n\}$ such that $\tlmuh=\det(\tlmu{h}_{ij})=\const$, say, by choosing $\{e_1,\cdots,e_n\}$ to be orthonormal with respect to the affine metric $\tlmug$ which, as shown in (1), is now exactly the induced metric of $\td g$. Consequently, we have
$$\omega^{n+1}_{n+1}+\fr1{n+2}d\log H\equiv 0,$$
which implies, by Proposition \ref{prop2.1}, that $e_{n+1}$ or $\xi$ is parallel to the affine normal vector $\tlmuy$, and
\be\label{trdiftsr}
\sum\tlmug^{ij}\,\tlmua^k_{ij}\equiv \sum\tlmug^{ij}\left(\tlmugm^k_{ij}-\tlmutdgm^k_{ij}\right)=0.
\ee
In particular, if $\{e_1,\cdots,e_n\}$ is chosen to be orthonormal w.r.t. $\tlmug$, then we have $$1=\det(\tlmug_{ij})=\tlmuh^{\fr2{n+1}},$$
that is, $\tlmuh\equiv 1$ and $\tlmu{h}_{ij}=\tlmug_{ij}=\delta_{ij}$. Using \eqref{sxij} and \eqref{trdiftsr} we find
\begin{align*}
\tlmuy=&\fr1n\Delta_{\tlmug}\tlmux =\fr1n\sum\tlmug^{ij}\left(\tlmux_{ij} -\sum\tlmux_k\tlmutdgm^k_{ij}\right)\\ =&\fr1n\left(\sum\tlmug^{ij}\left(\tlmugm^k_{ij} -\tlmutdgm^k_{ij}\right)\tlmux_k +\sum\tlmug^{ij}\,\tlmug_{ij}e_{n+1}\right)\\
=&e_{n+1}=\xi.
\end{align*}

By Remark \ref{rmk4.2}, we can suitably choose the constant $c_1$, if necessary, such that $a(0)=1$. It then follows that, $Y=\fr1{a(0)}{}^0Y={}^0Y$ is the affine normal vector of the original hypersurface $x\equiv{}^0x$.
So we can conclude that
the hypersurface $x\equiv {}^0x$ is among a family \eqref{mux} of equiaffine parallel hypersurfaces in $A^{n+1}$.

(3) All hypersurfaces $\tlmux$ in \eqref{mux} are of constant affine mean curvature.

In fact, we use once again \eqref{den+1} and compare it with \eqref{yi} to obtain that
$$
-\sum\tlmub^j_i\tlmux_j=\tlmuy_i=\xi_i=2\td\nabla_{e_i}\xi =-2\sum\tlmua^j_i\tlmux_j,\quad\forall\,i,
$$
which proves that $\tlmua^j_i=\fr12\tlmub^j_i$ for all $i,j$ and $\mu$. Take the trace we find that, for each $\mu$, the mean curvature of the isometric immersion $\tlmux:M^n\to (U,\td g)$ is equal to one half of the affine mean curvature $\tlmul_1$ of the locally strongly convex hypersurface $\tlmux:M^n\to A^{n+1}$. But, since the former $\tlmux$ is the regular level set of the (Riemannian) isoparametric function $F$ on $(U,\td g)$, being of constant mean curvature, the latter $\tlmux$ must be of constant affine mean curvature, for each $\mu\in(-\delta,\delta)$.

Summing up conclusions (1), (2) and (3), the sufficiency part of the theorem is proved.

Now we give a proof of Theorem \ref{main2}.

{\it Proof of Theorem \ref{main2}}

It suffices to show that all the affine principal curvatures $\lambda_1,\cdots,\lambda_n$ of $x$ are constant if each of its equiaffine parallel hypersurfaces is of constant affine mean curvature. To do this, we use \eqref{mulmbd} to find
\be\label{mul1}
\tlmul_1=\fr1n\sum\fr{c\lambda_i}{1-\mu\lambda_i},\quad \text{for\ \ }\mu\in (-\delta,\delta),
\ee
that is,
\be\label{mul1c}
\sum\fr{\lambda_i}{1-\mu\lambda_i}=\fr ncL_1(\mu),\quad \mu\in (-\delta,\delta).
\ee
If $\tlmul_1$ is constant for all $\mu\in (-\delta,\delta)$, then right hand side of \eqref{mul1c} is a smooth function of $\mu$. Taking the differentiation of \eqref{mul1c} by $(n-1)$-times gives
\be\label{dmul1}
\sum\fr{\lambda^\iota_i}{(1-\mu\lambda_i)^\iota} =\left(\fr1cL_1(\mu)\right){}^{(\iota)},\quad \mu\in (-\delta,\delta),\quad \iota=2,\cdots,n.
\ee
Evaluating \eqref{mul1c} and \eqref{dmul1} at $\mu=0$ simply gives
$$
\sum \lambda^\iota_i=\const,\quad\iota=1,2,\cdots,n,
$$
which implies that $\lambda_i=\const$ for each $i$.

This completes the proof of Theorem \ref{main2}.

The following two corollaries are direct:

\begin{cor}\label{cor4.1}
A non-degenerate hypersurface in the affine space $A^{n+1}$ is equiaffine isoparametric if and only if it is among a family of equiaffine parallel hypersurfaces that are of constant affine principal curvature.
\end{cor}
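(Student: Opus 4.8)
The plan is to obtain Corollary \ref{cor4.1} as an immediate consequence of Theorem \ref{main2} and Proposition \ref{prop3.5}, essentially without new computation; both implications take only a few lines.

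For the ``if'' direction I would argue as follows: suppose $x$ lies in a family of equiaffine parallel hypersurfaces each of which is of constant affine principal curvature. Since by Definition \ref{affmeancur} the affine mean curvature is the arithmetic mean $L_1=\fr1n\sum_i\lambda_i$ of the affine principal curvatures, constancy of all the $\lambda_i$ on a given member of the family forces $L_1$ to be constant on that member. Hence this is a family of equiaffine parallel hypersurfaces of constant affine mean curvature containing $x$, so $x$ is equiaffine isoparametric by Definition \ref{isop hyps}.

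For the ``only if'' direction I would start from $x$ equiaffine isoparametric and invoke Theorem \ref{main2} to get that $x$ itself has constant affine principal curvature. By Definition \ref{isop hyps} together with Corollary \ref{cor3.1}, there is an interval $(-\delta,\delta)$ for which the hypersurfaces $\tlmux=x+\mu Y$, $\mu\in(-\delta,\delta)$, form a family of equiaffine parallel hypersurfaces containing $x$. Proposition \ref{prop3.5} then applies to this family: since $x$ is of constant affine principal curvature, so is every $\tlmux$ in it. Thus $x$ belongs to a family of equiaffine parallel hypersurfaces of constant affine principal curvature, which is exactly what is needed.

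I do not anticipate a real obstacle. The only step worth a moment of care is that the family over which Proposition \ref{prop3.5} is applied must be produced explicitly and seen to contain $x$; but this is precisely the family already furnished by Definition \ref{isop hyps} (equivalently, the family $\tlmux=x+\mu Y$ appearing in the proof of Theorem \ref{main1}), so the argument closes at once. In the write-up I would simply cite Theorem \ref{main2}, Proposition \ref{prop3.5}, and Corollary \ref{cor3.1}, keeping the proof to a few lines.
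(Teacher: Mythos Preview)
Your proposal is correct and matches the paper's approach: the paper states Corollary~\ref{cor4.1} immediately after the proof of Theorem~\ref{main2} and justifies it in one line as ``a direct consequence of Proposition~\ref{prop3.5}'', implicitly using the just-proved Theorem~\ref{main2} exactly as you do. Your write-up is simply a more explicit version of the same argument; the extra citation of Corollary~\ref{cor3.1} to exhibit the family $\tlmux=x+\mu Y$ is harmless but not strictly needed, since Definition~\ref{isop hyps} already hands you such a family.
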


In fact, this is a direct consequence of Proposition \ref{prop3.5}.

\begin{cor}\label{cor4.2}
A strongly convex hypersurface in the affine space $A^{n+1}$ is an (equi)affine hypersphere if and only if it is among a family of equiaffine parallel affine hyperspheres of the same type.
\end{cor}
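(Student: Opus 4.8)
The corollary is an equivalence, and the \emph{if} direction is essentially immediate: in the parametrization \eqref{paral hyps} the hypersurface $x$ itself is the member with $\mu\equiv 0$, so if $x$ lies in a family of equiaffine parallel affine hyperspheres then $x$ is one of them and is therefore an affine hypersphere; I would dispose of this in a single line. For the \emph{only if} direction, assume $x$ is a locally strongly convex affine hypersphere, so that all of its affine principal curvatures equal one and the same constant $\lambda$, the sign of $\lambda$ recording whether $x$ is elliptic, parabolic or hyperbolic.

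Then $x$ is of constant affine principal curvature, hence equiaffine isoparametric by Theorem \ref{main2}, so by Corollary \ref{cor4.1} it sits in a family $\{\tlmux=x+\mu Y\}_{\mu\in(-\delta,\delta)}$ of equiaffine parallel hypersurfaces, each of constant affine principal curvature. Substituting $\lambda_i=\lambda$ into \eqref{mulmbd} shows that the affine principal curvatures of $\tlmux$ are all equal to $c\lambda/(1-\mu\lambda)$; hence each $\tlmux$ is again an affine hypersphere, which is the main content of the statement.

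It remains to check that every $\tlmux$ has the \emph{same type} as $x$, i.e.\ that $\sgn\big(c\lambda/(1-\mu\lambda)\big)=\sgn(\lambda)$. If $\lambda=0$ this is trivial, all members being parabolic. If $\lambda\neq 0$, I would shrink $\delta$ so that $1-\mu\lambda>0$ on $(-\delta,\delta)$; then $\det(\tlmut)=(1-\mu\lambda)^n>0$, and \eqref{muhandh} (equivalently \eqref{c}) together with $\tlmuh>0$ and $H>0$ forces the normalizing constant $c=c(\mu)$ to be positive, so that $c\lambda/(1-\mu\lambda)$ indeed has the sign of $\lambda$. I do not anticipate any real obstacle in this proof: given Corollary \ref{cor4.1} (or Proposition \ref{prop3.5}) and formula \eqref{mulmbd}, everything is immediate, and the only point requiring care is this last sign bookkeeping --- fixing the sign of $c(\mu)$ and keeping $\mu$ in a range on which $1-\mu\lambda$ does not vanish, the boundary of that range being precisely where a member of the family degenerates (for an elliptic affine hypersphere, at the center $\mu=1/\lambda$).
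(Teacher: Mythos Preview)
Your proof is correct and follows essentially the same route as the paper, which disposes of the corollary in a single line by observing from \eqref{mulmbd} that the $\tlmulmbd_i$ are all equal if and only if the $\lambda_i$ are. Your treatment is in fact more complete, since you explicitly verify the ``same type'' clause (the sign bookkeeping for $c$ and $1-\mu\lambda$), which the paper's one-line proof leaves implicit.
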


In fact, by \eqref{mulmbd}, all the affine principal curvatures $\tlmulmbd_i$ are equal to each other if and only if those $\lambda_i$ of $x$ are.

\begin{rmk}\rm To end this paper, we would like to give some final remarks: Firstly,
Theorem \ref{main2} can be equivalently and directly obtained by \eqref{dett=const}. In fact, from \eqref{dett=const} one easily sees that the two conditions for the equiaffine parallel hypersurfaces are rather restrictive. Secondly, it turns out that, in the definition of equiaffine isoparametric hypersurfaces (Definition \ref{isop hyps}), the adjective subordinate claus ``that are of constant affine mean curvature'' can be really deleted! For example, Corollary \ref{cor4.1} can be simply restated as

\begin{cor}\label{cor4.1'}
A non-degenerate hypersurface in the affine space $A^{n+1}$ is equiaffine isoparametric if and only if it is among a family of equiaffine parallel hypersurfaces.
\end{cor}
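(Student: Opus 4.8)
Since Definition \ref{isop hyps} already builds ``constant affine mean curvature'' into the ambient family, the \emph{only if} implication of Corollary \ref{cor4.1'} is immediate, and I would dispose of it in a sentence. The whole point is the \emph{if} implication: I would show that if $x$ merely lies inside a family of equiaffine parallel hypersurfaces, then that family is automatically of constant affine mean curvature along each member. The plan is to read this off the polynomial identity \eqref{dett=const} in Corollary \ref{cor3.2} and then hand the outcome to Theorem \ref{main2} (equivalently, to Proposition \ref{prop3.5}).

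Concretely, suppose $\tlmux = x + \mu Y$ is equiaffine parallel to $x$ for every $\mu$ in an open interval $(-\delta,\delta)$. By Corollary \ref{cor3.2}(2), for each such $\mu$ the expression
\[
nL_1 - C^2_n\,\mu\, L_2 + \cdots + (-1)^{n+1}\mu^{n-1}K
\]
is a constant function on $M^n$ --- this is just the assertion that $\det(\tlmut) = \prod_i(1-\mu\lambda_i)$ is constant on $M^n$, the coefficients $L_1,\dots,L_{n-1},K$ being $\mu$-independent invariants of the fixed hypersurface $x$. Fixing any two points $u,u'\in M^n$ and subtracting, I get a polynomial in the single variable $\mu$ that vanishes on the interval $(0,\delta)$, hence vanishes identically; comparing coefficients forces $L_r(u)=L_r(u')$ for $r=1,\dots,n-1$ and $K(u)=K(u')$. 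Since $u,u'$ are arbitrary, the normalized elementary symmetric functions $L_1,\dots,L_{n-1},L_n=K$ of the affine principal curvatures are all constant on $M^n$, i.e. the characteristic polynomial of the affine shape operator $B$ has constant coefficients; exactly as in the closing lines of the proof of Theorem \ref{main2}, this makes each $\lambda_i$ constant.

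At that stage $x$ is of constant affine principal curvature, and I would invoke Theorem \ref{main2} directly --- or, to stay self-contained, Proposition \ref{prop3.5}, which gives $\tlmulmbd_i = c\lambda_i/(1-\mu\lambda_i)=\const$ along each $\tlmux$ and hence $\tlmul_1=\const$ --- to conclude that $x$ is equiaffine isoparametric in the sense of Definition \ref{isop hyps}. The step I expect to need the most care is exactly the passage from ``all $L_r$ (and $K$) constant'' to ``each $\lambda_i$ constant'', since the roots of a polynomial with constant coefficients might a priori be permuted from point to point; but this is handled just as in the proof of Theorem \ref{main2}, using that the $\lambda_i$ are globally well-defined continuous functions on the (connected) manifold $M^n$. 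Everything else is a routine unwinding of \eqref{dett=const}.
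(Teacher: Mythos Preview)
Your proposal is correct and follows precisely the route the paper itself sketches in the final Remark: the paper does not give a detailed proof of Corollary~\ref{cor4.1'} but merely notes that ``Theorem~\ref{main2} can be equivalently and directly obtained by \eqref{dett=const}'', from which the redundancy of the constant-mean-curvature clause in Definition~\ref{isop hyps} is immediate. Your argument --- that constancy of $\det(\tlmut)=\prod_i(1-\mu\lambda_i)$ on $M^n$ for every $\mu$ in an interval forces each coefficient $L_r$ to be constant, hence each $\lambda_i$ constant, hence (via Theorem~\ref{main2} or Proposition~\ref{prop3.5}) $x$ equiaffine isoparametric --- is exactly what the paper has in mind, just written out in full.
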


\end{rmk}

\end{document}